\providecommand{\U}[1]{\protect\rule{.1in}{.1in}}
\newtheorem*{theorem*}{Theorem}
\newtheorem*{lemma*}{Lemma}
\newtheorem{lemma}{Lemma}[subsection]
\newtheorem{proposition}[lemma]{Proposition}
\newtheorem{remark}[lemma]{Remark}
\newtheorem{theorem}[lemma]{Theorem}
\newtheorem{corollary}[lemma]{Corollary}
\newtheorem*{conjecture*}{Conjecture}
\newtheorem*{remark*}{Remark}
\newtheorem{thm}[lemma]{Theorem}
\newtheorem{defn}[lemma]{Definition}
\newtheorem{cor}[lemma]{Corollary}
\newtheorem{rem}[lemma]{Remark}
\newtheorem{introtheorem}{Theorem}
\newtheorem{introthm}[introtheorem]{Theorem}
\sloppy \theoremstyle{plain}
\newcommand{\dseq}{\mathbf{d}}
\newcommand{\depth}{\operatorname{depth}}
\newcommand{\bs}{\backslash}
\newcommand{\Hom}{\operatorname{Hom}}
\newcommand{\diag}{\operatorname{diag}}
\newcommand{\cc}{\mathbb{C}}
\newcommand{\qq}{\mathbb{Q}}
\newcommand{\rr}{\mathbb{R}}
\newcommand{\A}{\mathbb{A}}
\newcommand{\triv}{{\bf 1}}
\newcommand{\Ind}{\operatorname{Ind}}
\newcommand{\re}{\operatorname{Re}}
\newcommand{\Q}{{\mathbb Q}}
\newcommand{\R}{{\mathbb R}}
\newcommand{\pr}{{\operatorname{pr}}}
\newcommand{\model}{{\mathcal{M}}}
\newcommand{\Fre}{{Fr\'{e}chet \,}}
\newcommand{\abs}[1]{\left|{#1}\right|}
\newcommand{\aut}{{\operatorname{aut}}}
\newcommand{\lieg}{\operatorname{\mathfrak{g}}}
\newcommand{\Ad}{\operatorname{Ad}}
\newcommand{\vv}{\mathcal{V}}
\newcommand{\udg}{\widehat{G_n}}
\newcommand{\ud}[1]{\widehat{#1}}
\title{Existence of Klyachko models for $GL(n,\rr)$ and $GL(n,\cc)$}
\date\today
\author{Dmitry Gourevitch}
\address{Dmitry Gourevitch, Faculty of Mathematics and Computer Science, Weizmann
Institute of Science, POB 26, Rehovot 76100, Israel }
\email{dimagur@weizmann.ac.il}
\urladdr{\url{http://www.wisdom.weizmann.ac.il/~dimagur}}
\author{Omer Offen}
\address{Omer Offen, Department of Mathematics,
Technion-Israel Institute of Technology, Technion City, Haifa 32000, Israel}
\email{offen@tx.technion.ac.il}
\urladdr{\url{http://www.technion.ac.il/~offen/}}
\author{Siddhartha Sahi}
\address{Siddhartha Sahi, Department of Mathematics, Rutgers University, Hill Center -
Busch Campus, 110 Frelinghuysen Road Piscataway, NJ 08854-8019, USA}
\email{sahi@math.rugers.edu}
\author{Eitan  Sayag}
\address{Eitan  Sayag,  Department of Mathematics,
Ben Gurion University of the Negev,
P.O.B. 653
Be'er Sheva 84105, Israel}
\email{eitan.sayag@gmail.com}
\urladdr{\url{http://www.math.bgu.ac.il/~sayage/}}
\begin{document}
\begin{abstract}
We prove that any unitary representation of $GL(n,\rr)$ and $GL(n,\cc)$ admits an
 equivariant linear form with respect
to one of the subgroups  considered by Klyachko.
\end{abstract}

\maketitle

\tableofcontents

\section{Introduction}

Let $F$ be either ${\mathbb{R}}$ or ${\mathbb{C}}$.
Let $G_{n}:={GL}(n,F)$ and let $\widehat{G_n}$ denote the unitary dual of $G_n$.
For $\pi\in \widehat{G_n}$ let $\pi^{\infty}$ denote the \Fre space of smooth vectors in $\pi$.
For any decomposition $n=r+2k$ we
consider a subgroup of $G_n$ defined by
\[
H_{r,2k}=\left\{  \left(
\begin{array}
[c]{cc}%
u & X\\
0 & h
\end{array}
\right)  \in G_n:u\in N_{r},\,X\in M_{r\times 2k}(F)\text{ and }h\in
Sp(2k)\right\}  .
\]
Here $N_{r} \subset G_r$ denotes the group of $r\times r$ upper unitriangular
matrices and
\begin{equation} \label{eq: symp form}
Sp(2k)=\left\{  g\in G_{2k}:{}^{t}g J_k  g=J_k  \right\} \text{ where } J_k = \left(
\begin{array}
[c]{cc}
& w_{k}\\
-w_{k} &
\end{array}
\right)
\end{equation}
and $w_{k}\in G_{k}$ is the permutation matrix with $(i,j)$th
entry equal to $\delta_{k+1-i,j}$. Let $\psi$ be a non-trivial additive
character of $F$. We associate to $\psi$ the character $\psi_r$ of
$N_r$ defined by
\[
\psi_r(u)=\psi(u_{1,2}+\cdots+u_{r-1,r})
\]
and the character $\phi_{r,2k}$ of $H_{r,2k}$
defined by
$$
\phi_{r,2k}\left(
\begin{array}
[c]{cc}%
u & X\\
0 & h
\end{array}
\right)  =\psi_{r}(u).
$$

%The space of invariant linear functionals associated with the

For $\pi\in\widehat{G_{n}}$ we consider the space
$\Hom_{H_{r,2k}}(\pi^{\infty},\phi_{r,2k})$ of continuous
$(H_{r,2k},\phi_{r,2k})$-equivariant linear forms on the Frech\'et
space $\pi^{\infty}$ of smooth vectors in $\pi$. We refer to a
non-zero element of  $\Hom_{H_{r,2k}}(\pi^{\infty},\phi_{r,2k})$
as a \emph{Klyachko linear form} of type $(r,2k)$.
Let
\[
\model_{r,2k}=\{f:G_n\rightarrow\cc: f \text{ is smooth and }f(hg)=\phi_{r,2k}(h)f(g),\ h\in H_{r,2k},\,g\in G_n\}.
\]
If $\pi$ is an irreducible Hilbert representation of $G_n$ then a non-zero element $\ell\in\Hom_{H_{r,2k}}(\pi^{\infty},\phi_{r,2k})$ defines a realization of $\pi^\infty$ in the space of functions $\model_{r,2k}$
via $v\mapsto f_v:\pi^\infty\rightarrow \model_{r,2k}$ where $f_v(g)=\ell(\pi(g)v),\,g\in G$.
We therefore refer to $\model_{r,2k}$ as the \emph{Klyachko model} of type $(r,2k)$.
With this relation in mind for the rest of this paper we focus on Klyachko linear forms rather then Klyachko models.
%We note
%that by Frobenuis reciprocity (Theorem 5.3.3.1 of \cite{MR0498999}) we
%obtain an embedding of $\pi^{\infty}$ into an appropriate space of
%functions (see \cite{MR0498999} section 5.1), which we call the
%\emph{Klyachko model.}

An analogue of this finite family of spaces of linear forms
associated with representations of $GL(n)$ over a finite field was
first considered by Klyachko \cite{Kly}, followed by the work of
Inglis-Saxl \cite{MR1129515} and the work of Howlett-Zworestine
\cite{MR1802175}.

Over a $p$-adic field, the problem was first considered by
Heumos-Rallis \cite{MR1078382} and further studied by Offen-Sayag in \cite{MR2332593, MR2417789, MR2414223, MR2515933}.
The outcome is disjointness and uniqueness of Klyachko linear forms (in fact for all irreducible
admissible representations) and existence for any representation in the unitary dual.
In fact, the type of Klyachko linear form that a given unitary representation $\pi$ admits is made explicit as follows.
Based on the classification of Tadic for the unitary dual \cite{MR870688} Venkatesh associated in \cite{MR2133760} a partition $\vv(\pi)$, the $SL(2)$-type of $\pi$,  to any unitary $\pi$. We recall the classification and the definition of the $SL(2)$-type in Section \ref{subsec:ClassPart} below.
If $r$ is the number of odd parts of the partition $\vv(\pi)$ then $\Hom_{H_{r,2k}}(\pi^{\infty},\phi_{r,2k})\ne 0$ (see \cite[(5.1)]{MR2515933}).

The existence of a Klyachko linear form in the $p$-adic case is proved along the following lines. In the case $r=0$ a linear form invariant by the symplectic group is constructed by a global (automorphic) argument for building blocks of unitary representations (namely, generalized Speh representations) \cite[Proposition 1]{MR2332593}. For a general unitary representation associated with the case $r=0$, the invariant linear form is obtained by an explicit construction for induced representations based on  Bernstein's principle of meromorphic continuation \cite[Proposition 2]{MR2332593}.
The general case, treated in \cite{MR2417789}, is obtained by a reduction to the case $r=0$ using the theory of derivatives of Bernstein-Zelevinsky \cite{MR0579172}.

In this work we address the existence of Klyachko linear forms over Archimedean fields.
Disjointness and uniqueness are addressed in an upcoming work \cite{AOS}. The partition $\vv(\pi)$ for every $\pi\in \widehat{G_n}$ is defined in \cite[\S 2.2]{MR2133760} also in the Archimedean case.
%in terms of the interpretation due to Tadic \cite[Theorem D]{MR870688} of Vogan's classification \cite{MR827363}.
Our main result is
\begin{introthm}\label{thm:main}
Let $\pi\in\widehat{G}_{n}$ and let $r$ be the number of odd parts of the partition $\vv(\pi)$. Then
$\operatorname{Hom}_{H_{r,n-r}}(\pi^{\infty}, \phi_{n-r,r})
\ne 0$.
\end{introthm}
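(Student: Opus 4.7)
The plan is to mirror the three-step strategy used by Heumos--Rallis and Offen--Sayag in the $p$-adic setting, adapting each step to the Archimedean category of smooth Fr\'echet representations of moderate growth. By the Vogan--Tadi\'c classification of $\widehat{G_n}$ that will be recalled in Section \ref{subsec:ClassPart}, every $\pi\in\widehat{G_n}$ is (fully) parabolically induced from an external tensor product of Speh representations twisted by a complementary-series character; moreover, the number $r$ of odd parts of $\vv(\pi)$ can be read off additively from the inducing data. This will let us reduce everything to the case when $r=0$.

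The first substep is to handle the \emph{building blocks}: a Speh representation $\pi$ of $G_n$ (necessarily with $n$ even and all parts of $\vv(\pi)$ even). Here the goal is to produce a non-zero continuous $\Sp(n)$-invariant linear form on $\pi^\infty$. Following \cite[Proposition 1]{MR2332593}, the plan is to globalize: realize $\pi$ as the Archimedean component of a residual automorphic representation of $GL_n(\A)$ obtained from a Speh-type residue of an Eisenstein series, and construct the local symplectic invariant functional from the (convergent or suitably regularized) symplectic period over $\Sp(n,\Q)\bs\Sp(n,\A)$. Non-vanishing of the global period on this particular residual realization is the arithmetic input, analogous to the Jacquet--Rallis construction used in the $p$-adic case.

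The second substep is the general unitary $\pi$ with $r=0$. Writing $\pi$ as parabolically induced from Speh representations twisted by a complex parameter $\underline{s}$ on the complementary-series torus, one builds the $\Sp(n)$-invariant functional on the induced model as an explicit orbital integral over a suitable $\Sp(n)$-orbit in the flag variety, applied to a Speh-invariant form produced in the first substep. The integral converges in a translate of the Weyl chamber, and is continued meromorphically to the unitary parameter using the Archimedean version of Bernstein's principle of meromorphic continuation for equivariant distributions on Schwartz sections. Non-vanishing at the unitary point is established by a residue/leading-term argument: the leading term of the meromorphic continuation still yields a non-zero continuous $\Sp(n)$-equivariant form, now on the irreducible quotient that equals $\pi^\infty$.

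The third substep reduces general $r$ to $r=0$. Following \cite{MR2417789}, one writes the desired $(H_{r,n-r},\phi_{r,n-r})$-equivariant form as the composition of an $\Sp(n-r)$-invariant form, produced by the previous steps on the ``symplectic'' part of $\pi$, with a Whittaker-type degenerate integration along $N_r$; the key structural input is that removing the $r$ odd parts from $\vv(\pi)$ corresponds exactly to passing to this Whittaker piece. The principal obstacle is precisely this last point in the Archimedean category: one needs an Archimedean analogue of the relevant Bernstein--Zelevinsky derivative, respecting Casselman--Wallach globalization, and one must verify that an $\Sp$-invariant functional on the derivative lifts, through a continuous $N_r$-coinvariant construction (transverse integration against $\psi_r$), to a continuous $(H_{r,n-r},\phi_{r,n-r})$-equivariant functional on $\pi^\infty$ without losing non-triviality. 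This is the step where the topological fragility of smooth Fr\'echet representations---absent in the $p$-adic case---is the main technical difficulty, and where most of the work of the paper is expected to lie.
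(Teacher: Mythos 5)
Your first two substeps match the paper's argument closely: the Speh building blocks are handled by globalizing to a residual automorphic representation, invoking the non-vanishing of the global symplectic period on the residual spectrum, and then using automatic continuity to pass from the $(\mathfrak g,K)$-module to $\pi^\infty$; and the general even ($r=0$) case is handled by realizing $\pi$ at a point of a meromorphic family $I(\sigma,s)$ and taking the leading term of a meromorphically continued orbital integral. The one substantive difference in substep two is that the meromorphic continuation is not an ad hoc Bernstein-type argument but is quoted directly from Carmona--Delorme's theory of $H$-invariant distribution vectors for generalized principal series of reductive symmetric spaces, which gives both the continuation and a clean non-vanishing statement for the family.

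The genuine gap is in your third substep, which you correctly flag as the heart of the matter but then leave unresolved, and moreover the mechanism you sketch points in the wrong direction. You propose to \emph{lift} an $\Sp$-invariant functional on a derivative up to $\pi^\infty$ via a continuous $N_r$-coinvariant construction (``transverse integration against $\psi_r$''). No such general Archimedean derivative/coinvariant formalism is available, and the paper does not construct one. What it uses instead is Sahi's theory of the \emph{highest} derivative, which exists precisely because $\pi|_{P_n}$ is irreducible (Kirillov's conjecture) and Mackey theory then forces $\pi|_{P_n}\simeq I^{d-1}E(A(\pi))$ for a unique depth $d$ and a unique unitary $A(\pi)$. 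Two further inputs are essential and absent from your sketch. First, one needs to know that $d=r(\pi)$ for odd $\pi$ and that $A(\pi)$ is then even; this is exactly the content of the Gourevitch--Sahi identity $\vv(\pi)=\dseq(\pi)^t$, without which the reduction does not land in the $r=0$ case. Second, the passage from $\pi^\infty$ to $A(\pi)^\infty$ is a \emph{projection} (evaluation at the identity in the induced model $\Ind_{S_{n-d,d}}^{P_n}(\cdots)$, which is $(S_{n-d,d},\psi_d)$-equivariant by construction), and the real difficulty is showing that the composite of this projection with the symplectic functional is non-zero. The paper achieves this by deforming $\pi=\pi_e\times\pi_o$ into a holomorphic family $\pi_s$, using genericity of irreducibility of the target $\sigma_s$ to get dense image of the projection (hence non-vanishing of the composite for $s\ne 0$ in a punctured disc), and then extracting a non-zero leading term at $s=0$. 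Without this deformation-and-density argument your composite functional could a priori vanish identically on $\pi^\infty$. A minor but necessary further point: the derivative setup lives on the transposed Klyachko subgroup $H'_{2k,r}$, and one must pass between $H_{r,2k}$ and $H'_{2k,r}$ via the outer involution and the Gelfand--Kazhdan theorem $\pi^\tau\simeq\widetilde\pi$.
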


The scheme of proof in the $p$-adic case, described above, serves us as a guideline to prove the Theorem.
For the case $r=0$ a global argument similar to the $p$-adic case treats Speh representations and an explicit construction for induced representations is based on the work of Carmona-Delorme \cite{MR1274587}.

However, a theory of derivatives is not available in this context.
In order to reduce to the case $r=0$ we apply the theory of highest derivatives developed by Sahi in \cite{MR1026329} for unitary representations. A recent result of Gourevitch-Sahi (\cite[Theorem B]{GS}) allows us to express the integer $r$ in terms of the partition corresponding to $\pi$.

%\subsection{Structure of the paper}

%In \S \ref{sec:Prel} we give the necessary preliminaries. In \S \ref{subsec:Part} we define the associated variety and associated partition and describe them for all unitary representations of $G_n$ in terms of the Vogan classification, which we also recall. In \S\S \ref{subsec:DerDepth} we recall the definition of highest derivative (or adduced representation) from \cite{Sahi-Kirillov} and the depth sequence introduced recently in \cite{GS}.

%In \S \ref{sec:Sp} we prove Theorem \ref{thm:main} for the case $r=0$. In this case $n$ is even and the theorem just states existence of an $Sp(n)$-invariant functional. Since $(G_n,Sp(n))$ is a symmetric pair, this reduces (as we show in \S\S \ref{subsec:Prod}) to existence of an invariant functional on each block of the Vogan classification.
%For the case $F=\C$ this finishes the argument, since all characters of $G_k$ are trivial on $Sp(k)$. For $F=\R$ one also has to consider the Speh representation. This is done in \S\S \ref{subsec:Speh} using a global (automorphic) argument. We would like to remark that an $Sp(k,\bR)$-invariant functional on the Speh representation is also constructed in \cite{GSS} in a more explicit local way, thus giving a purely local construction of the Archimedean Klyachko models.

%In \S \ref{sec:PfMain} we prove Theorem \ref{thm:main} by reducing it to the case $r=0$ using derivatives.
\section{Preliminaries}\label{sec:Prel}

\subsection{Smooth vectors and induction}
\label{subsec:Analvec}

Let $(\pi,\mathcal{H})$ be a continuous Hilbert representation of a Lie
group $G$. A vector $v\in \mathcal{H}$ is called smooth if the map $g\mapsto\pi(g)v:G\rightarrow \mathcal{H}$ is infinitely differentiable. Both $G$
and its Lie algebra ${\mathfrak{g}}$ act on the space of smooth
vectors in $\mathcal{H}$ and we denote the corresponding representation by $(
\pi^{\infty},\mathcal{H}^{\infty})$.
It is naturally a \Fre representation of $G$.

\begin{thm}[Harish-Chandra] \label{thm:UniSmoothIrr}
Let $(\pi,\mathcal{H})$ be a unitary representation of a real reductive group $G$. Then $\pi$ is irreducible if and only if $\pi^{\infty}$ is irreducible. (cf. \cite[Theorem 3.4.11]{MR929683}).
\end{thm}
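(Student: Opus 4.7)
The plan is to prove the two implications separately, with the G{\aa}rding density theorem and the Dixmier--Malliavin theorem as the main analytical inputs. Throughout the argument one uses the standard fact that for every $f\in C_c^{\infty}(G)$ the convolution operator
\[
\pi(f)v:=\int_G f(g)\pi(g)v\,dg
\]
maps $\mathcal{H}$ continuously into the \Fre space $\mathcal{H}^{\infty}$, because $\pi(D)\pi(f)=\pi(L_D f)$ is a bounded operator on $\mathcal{H}$ for every $D\in U(\mathfrak{g})$, where $L_D$ denotes the associated left-invariant differential operator on $G$.

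I would start with the easier direction. Assume $\pi^{\infty}$ is irreducible and let $W\subset\mathcal{H}$ be a closed $G$-invariant subspace. The restriction $\pi|_W$ is a continuous unitary representation whose smooth vectors are $W^{\infty}=W\cap\mathcal{H}^{\infty}$; this is a $G$-invariant subspace of $\mathcal{H}^{\infty}$, closed in the Fr\'echet topology since the inclusion $\mathcal{H}^{\infty}\hookrightarrow\mathcal{H}$ is continuous and $W$ is closed in $\mathcal{H}$. By the hypothesis either $W^{\infty}=0$, in which case the G{\aa}rding density of smooth vectors of $\pi|_W$ forces $W=0$, or $W^{\infty}=\mathcal{H}^{\infty}$, in which case $W\supset\mathcal{H}^{\infty}$ and hence $W=\mathcal{H}$ by density of $\mathcal{H}^{\infty}$ in $\mathcal{H}$.

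For the forward direction, assume $\pi$ is irreducible and let $V\subset\mathcal{H}^{\infty}$ be a closed $G$-invariant subspace in the Fr\'echet topology. Its Hilbert-space closure $\overline V\subset\mathcal{H}$ is closed and $G$-invariant, so by irreducibility either $\overline V=0$ (whence $V=0$) or $\overline V=\mathcal{H}$ (whence $V$ is norm-dense in $\mathcal{H}$). In the latter case the task is to promote norm-density to the equality $V=\mathcal{H}^{\infty}$. For each $f\in C_c^{\infty}(G)$, the $G$-invariance together with the Fr\'echet-closedness of $V$ yields $\pi(f)V\subset V$, since the Riemann sums approximating $\pi(f)v$ for $v\in V$ lie in $V$ and converge in $\mathcal{H}^{\infty}$. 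Combining this with the continuity of $\pi(f):\mathcal{H}\to\mathcal{H}^{\infty}$ and the density of $V$ in $\mathcal{H}$, we obtain $\pi(f)\mathcal{H}\subset V$. Applying the Dixmier--Malliavin theorem, which expresses every vector of $\mathcal{H}^{\infty}$ as a finite sum $\sum_i\pi(f_i)w_i$ with $f_i\in C_c^{\infty}(G)$ and $w_i\in\mathcal{H}$, one concludes $\mathcal{H}^{\infty}\subset V$.

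The only genuinely non-formal step — and the main obstacle — is the upgrade, in the forward direction, from Hilbert-density of $V$ in $\mathcal{H}$ to equality $V=\mathcal{H}^{\infty}$ in the Fr\'echet topology. This is precisely where Dixmier--Malliavin factorization (or an equivalent argument carefully controlling all the seminorms $\|\pi(D)\cdot\|$) does the essential work; the rest of the proof is a formal manipulation with closed invariant subspaces and the continuity of the inclusion $\mathcal{H}^{\infty}\hookrightarrow\mathcal{H}$.
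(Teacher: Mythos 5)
Your proof is correct, but it follows a genuinely different route from the paper's. The paper does not argue analytically at all: it quotes Wallach's Theorem 3.4.11 (Harish-Chandra's theorem that $\pi$ is irreducible if and only if the underlying $(\mathfrak{g},K)$-module $\pi_K$ is) and observes in a remark that a $G$-invariant decomposition of $\pi$ induces one of $\pi^{\infty}$, which in turn induces one of $\pi_K$, so the equivalence for $\pi^{\infty}$ is sandwiched between the two sides of Wallach's statement. Your argument instead proves both implications directly: G{\aa}rding density handles the easy direction, and the chain $\pi(f)V\subset V$ (via Riemann sums converging in the \Fre topology), $\pi(f)\mathcal{H}\subset V$ (via continuity of $\pi(f):\mathcal{H}\to\mathcal{H}^{\infty}$ and norm-density of $V$), and Dixmier--Malliavin factorization handles the hard one. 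What this buys you is generality and self-containedness: your proof never uses reductivity, admissibility, or $K$-finite vectors, so it establishes the statement for an arbitrary Lie group and arbitrary unitary representation, at the cost of invoking the nontrivial Dixmier--Malliavin theorem. The paper's route is shorter on the page but rests on the much deeper Harish-Chandra admissibility machinery specific to real reductive groups, which is all the paper needs since that is its setting. Both arguments are sound; the one small point worth making explicit in yours is that for a closed $G$-invariant $W\subset\mathcal{H}$ one indeed has $W^{\infty}=W\cap\mathcal{H}^{\infty}$, which holds because $W$ is norm-closed, so derivatives of $g\mapsto\pi(g)v$ computed in $\mathcal{H}$ remain in $W$.
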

\begin{remark}
In fact [loc. cit.] says that $\pi$ is irreducible if and only if $\pi_K$, the underlying $(\mathfrak{g},K)$ module with respect to a compact subgroup $K$ of $G$, is irreducible. Since a $G$-invariant decomposition of $\pi$ (resp.~$\pi^\infty$) clearly provides one of $\pi^\infty$ (resp.~$\pi_K$), the above Theorem is indeed straightforward from [loc. cit.].
\end{remark}

Let $G$ be a Lie group with a Lie algebra $\mathfrak{g}$. Denote by $\Delta_{G}:G\rightarrow \rr_{>0}$ the modular function associated with $G$, i.e.
\[
    \Delta_G(g)=\abs{\det(\Ad(g)_{|\lieg})}.
\]

Let $H$ be a closed subgroup of $G$, $(\sigma,V)$ a Hilbert representation of $H$ and $\delta:H\rightarrow \rr_{>0}$ defined by $\delta\left(
h\right)  =\Delta_{H}\left(  h\right)  /\Delta_{G}\left(  h\right)$.

Let $W$ denote the Hilbert space of equivalence classes of
measurable functions $f:G\rightarrow V$ such that
\[
f(hg) =\delta^{\frac12}(
h)  \sigma(h)f(g)\ \ \  \text{ and } \ \ \  \left\Vert f\right\Vert^2_W:=\int_{H\bs G} \left\Vert f\left(
g\right)  \right\Vert _{V}^{2}\ dg<\infty.
\]
Let $(\pi,W)$ be the representation of $G$ defined by $\pi(g)f(x)=f(xg),\ x,\,g\in G$.
Denote the representation $(\pi,W)$ by $\Ind_{H}^{G}(\sigma)$, the normalized induction of $\sigma$ from $H$ to $G$.
If $(\sigma,V)$ is unitary then $\Ind_{H}^{G}(\sigma)$ is also unitary.

Recall the following result of Poulsen. It can be interpreted as a
representation-theoretic version of Sobolev's embedding theorem.
\begin{thm}[see \cite{MR0310137}, Theorem 5.1]\label{thm:Pou}
Let $(\sigma,V)$ be a unitary representation of $H$ and let $(\pi,W)=\Ind_{H}^{G}(\sigma)$.
Then $\Ind_{H}^{G}(\sigma)^{\infty}$ consists of all infinitely differentiable functions $f \in W$ such that all their derivatives with respect to left-$G$-invariant differential operators on $G$ are square integrable.
\end{thm}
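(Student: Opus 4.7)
The plan is to prove the two inclusions of the theorem separately, using the fact that right translation is unitary on $W$ and that, for sufficiently regular $f$, the infinitesimal action $d\pi(X)$ of $X\in\mathfrak{g}$ coincides with the left-$G$-invariant first order differential operator attached to $X$:
\[
(d\pi(X)f)(g)=\tfrac{d}{dt}\bigg|_{t=0} f(g\exp(tX)).
\]
Under this identification, the intrinsic condition $d\pi(X_1)\cdots d\pi(X_m)f\in W$ for all $m$ and all $X_i\in\mathfrak{g}$ translates into the classical condition that every left-invariant derivative of $f$ (up to any order) lies in $L^2(H\bs G,V)=W$.

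For the inclusion $\Ind_H^G(\sigma)^\infty\subseteq\{f\in W\cap C^\infty : \text{all left-invariant derivatives in }W\}$, I would use the standard characterization that $f\in\pi^\infty$ iff $d\pi(X_1)\cdots d\pi(X_m)f\in W$ for every $m$ and every $X_i$. The square integrability of these derivatives is then built in. The real content is promoting $f$ from an equivalence class of measurable sections to a genuine $C^\infty$ function on $G$. I would do this in local charts: cover $G$ by relatively compact open sets on which a smooth local section of $G\to H\bs G$ exists and on which a basis of left-invariant vector fields is a frame for $TG$. On such a chart the hypothesis yields a distribution whose derivatives with respect to this frame, and hence all partial derivatives, lie in $L^2_{\mathrm{loc}}$; the Sobolev embedding theorem then produces a $C^\infty$ representative, and the $H$-covariance $f(hg)=\delta^{1/2}(h)\sigma(h)f(g)$ glues these representatives into a single smooth function.

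For the reverse inclusion I would show by induction on $m$ that if $f\in W\cap C^\infty(G)$ has every left-invariant derivative in $W$, then the orbit map $g\mapsto\pi(g)f$ is $C^m$ as a map $G\to W$. The base step reduces to proving
\[
\lim_{t\to 0}\norm{t^{-1}(\pi(\exp tX)f-f)-Xf}_W=0
\]
for every $X\in\mathfrak{g}$. Taylor's formula with integral remainder gives
\[
\pi(\exp tX)f-f-tXf=\int_0^t (t-s)\,\pi(\exp sX)(X^2 f)\,ds,
\]
and since $\pi$ is unitary on $W$ the right hand side has $W$-norm at most $\tfrac{t^2}{2}\norm{X^2 f}_W\to 0$, using the hypothesis $X^2 f\in W$. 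Applying the same estimate to higher iterated left-invariant derivatives, which are all assumed to lie in $W$, yields the inductive step and thus smoothness of all orders of the orbit map, so $f\in\pi^\infty$.

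The main obstacle is the forward direction, specifically the elliptic regularity step that upgrades an element of $W$ whose distributional left-invariant derivatives are all in $L^2$ to a genuine $C^\infty$ function. The delicate point is not the Sobolev embedding itself but the need to perform it compatibly with the $H$-covariance that defines $W$, so that the pointwise smooth representative one obtains on each chart patches to a globally well-defined smooth function on $G$ (and not merely on a conull set). Choosing smooth local sections of the quotient $G\to H\bs G$ and transporting smoothness by left multiplication by $H$ is the natural mechanism, but one must verify that this procedure depends neither on the choice of section nor on the overlap between charts, and that the smooth representative so produced indeed realizes the abstract vector $d\pi(X_1)\cdots d\pi(X_m)f\in W$ as the genuine left-invariant derivative of $f$.
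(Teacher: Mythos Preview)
The paper does not prove this theorem: it is quoted as a known result, with the citation ``see \cite{MR0310137}, Theorem 5.1'' standing in for the proof. So there is no proof in the paper to compare your proposal against.

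That said, your sketch is essentially the argument Poulsen gives in the cited reference. The reverse inclusion via Taylor's formula with integral remainder and the unitarity of right translation is exactly the mechanism used there, and works as you describe. For the forward inclusion, Poulsen also reduces to a local Sobolev embedding: after localizing by a smooth compactly supported cutoff one lands in $H^m$ on a chart for every $m$, and the usual Sobolev lemma gives a continuous (hence smooth, by iteration) representative; the $H$-equivariance then propagates smoothness everywhere. Your identification of the ``main obstacle'' --- compatibility of the local smooth representatives with the $H$-covariance and with one another --- is accurate but not deep: two $L^2_{\mathrm{loc}}$ representatives that agree almost everywhere and are both continuous agree everywhere, so the patching is automatic once each local representative is continuous. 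Likewise, once $f$ is known to be $C^\infty$, the identification of the abstract $d\pi(X)f$ with the pointwise left-invariant derivative follows from the same Taylor estimate you already used in the reverse direction (dominated convergence on compacta). So your proposal is correct and aligned with the original source; the paper itself simply takes the result as input.
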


We will apply Poulsen's Theorem for certain Hilbert representations induced from a one dimensional twist of a unitary representation.
For the rest of this section let $\chi$ be a (not necessarily unitary) character of $H$ that extends to a smooth function $\chi':G\rightarrow \cc^*$.
Let $(\sigma,V)$ be a unitary representation of $H$ and $(\pi,W)=\Ind_H^G(\sigma)$.

There is an isomorphism of Hilbert representations $(\pi_\chi,W)\simeq \Ind_H^G(\sigma\otimes\chi)$ given by $f\mapsto \chi' f$, $f\in W$ where $\pi_\chi(g) f(x) =\chi'(x)^{-1}\chi'(xg) f(xg)$, $g,\,x\in G$.
Since $\chi'$ is smooth it follows that $w\in W$ is smooth with respect to $\pi_\chi$ if and only if it is smooth with respect to $\pi$. The following Corollaries are therefore immediate consequence of Poulsen's Theorem.

\begin{corollary}\label{cor: smth ind twist}
Every element of $\Ind_H^G(\sigma\otimes\chi)^\infty$ is an infinitely differentiable function on $G$ with values in $V$.
\end{corollary}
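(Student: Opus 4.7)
The plan is to combine the isomorphism $(\pi_\chi, W) \simeq \Ind_H^G(\sigma \otimes \chi)$ stated in the paragraph preceding the corollary with Poulsen's Theorem \ref{thm:Pou}. The obstacle is minimal, as the corollary is advertised as immediate; the only content is transporting smoothness across multiplication by the smooth unit $\chi'$.

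First I would unwind the isomorphism $f \mapsto \chi' f$: every element $F \in \Ind_H^G(\sigma \otimes \chi)^\infty$ has the form $F = \chi' f$ for some $f \in W$ that is a smooth vector for $\pi_\chi$. As noted in the paragraph preceding the corollary, smoothness for $\pi_\chi$ coincides with smoothness for $\pi = \Ind_H^G(\sigma)$, since $\chi'$ is smooth. Then I would invoke Poulsen's Theorem applied to $\Ind_H^G(\sigma)$ to conclude that $f$ is, in particular, an infinitely differentiable $V$-valued function on $G$. Finally, because $\chi' \colon G \to \cc^*$ is smooth, the product $F = \chi' f$ is again an infinitely differentiable function from $G$ to $V$, which gives the statement.
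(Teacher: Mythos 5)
Your proof is correct and follows exactly the route the paper intends: transport a smooth vector of $\Ind_H^G(\sigma\otimes\chi)$ back through the isomorphism $f\mapsto\chi'f$ to a smooth vector of $\pi_\chi$, identify these with smooth vectors of $\pi=\Ind_H^G(\sigma)$ using the smoothness of $\chi'$, apply Poulsen's Theorem to get that $f$ is an infinitely differentiable $V$-valued function, and multiply back by the smooth unit $\chi'$. This is precisely the argument the paper compresses into the phrase ``immediate consequence of Poulsen's Theorem.''
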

%\begin{remark}
%In fact, Poulsen's Theorem is only stated in [loc. cit.] under the
%assumption that $(\sigma,V)$ is unitary. The invariance of the form on % unitary structure of
%$\sigma$ is never used in the proof and the more general statement
%holds. Furthermore, our usage of Theorem \ref{thm:Pou}
%will be to certain non unitary representations that are twists of
%unitary representation by characters. This slight generalization
%of Poulsen's theorem follows easily from the original statement.
%\end{remark}
%This makes evaluation at points well defined on our spaces of induced representations.
%The image of evaluation at points in fact lies in the relevant space of smooth functions.
%\begin{cor}\label{cor: smth image}
% If $f\in
%\Ind_H^G(\sigma\otimes \chi)^{\infty}$ then $f(g)\in V^\infty,\ g\in G$.
%\end{cor}
%\begin{proof}
%Note that for $f\in \Ind_H^G(\sigma\otimes \chi)^{\infty}$ we have $(\chi')^{-1}f\in W^\infty$. Without loss of generality we may therefore assume that $\chi=\triv_H$. Let $f\in W^{\infty}$ and $g\in G$. Set $v=f(g)\in V$, then the action map of
%$H$ on $v$ is
%\[
%h\mapsto\sigma\left(  h\right)  v=\sigma\left(  h\right)  f\left(  g\right)
%=\delta^{-1/2}(h)f(hg).
%\]
%By Theorem \ref{thm:Pou} $f$ is a smooth function and therefore the action map $h\mapsto \sigma(h)v$ is smooth, i.e. $v\in V^\infty$.
%\end{proof}
%If $H\bs G$ is compact then the square-integrability condition in Theorem \ref{thm:Pou} is automatically satisfied.
%We therefore also have
\begin{cor}\label{cor:ParInd}
Suppose that $H
\backslash G$ is compact and let $f\in \Ind_H^G(\sigma\otimes \chi)$. Then  $f\in \Ind_H^G(\sigma\otimes \chi)^{\infty}$ if and only if
$f:G\rightarrow V$ is an infinitely differentiable function.
\end{cor}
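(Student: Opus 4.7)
The plan is to leverage Poulsen's Theorem (Theorem \ref{thm:Pou}) together with the isomorphism $\Ind_H^G(\sigma\otimes\chi)\simeq\Ind_H^G(\sigma)$ given by multiplication by $\chi'$, and then use compactness of $H\backslash G$ to convert the Sobolev-type $L^2$ condition into just smoothness.

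One direction is essentially free: if $f\in\Ind_H^G(\sigma\otimes\chi)^\infty$ then by Corollary \ref{cor: smth ind twist} the function $f\colon G\to V$ is infinitely differentiable. For the converse, suppose $f\in\Ind_H^G(\sigma\otimes\chi)$ is infinitely differentiable. Since $\chi'\colon G\to\cc^*$ is smooth and nowhere vanishing, the function $f_0:=(\chi')^{-1}f$ is also infinitely differentiable, and the isomorphism $\Ind_H^G(\sigma)\simeq\Ind_H^G(\sigma\otimes\chi)$, $f_0\mapsto \chi' f_0$ identifies smooth vectors; hence it suffices to show $f_0\in\Ind_H^G(\sigma)^\infty$.

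By Theorem \ref{thm:Pou} applied to the unitary $\sigma$, this amounts to checking that $\mathcal D f_0$ is square integrable on $H\backslash G$ for every left-$G$-invariant differential operator $\mathcal D$ on $G$. Such a $\mathcal D$ commutes with left translations by $H$, so $\mathcal D f_0$ again satisfies the equivariance $(\mathcal D f_0)(hg)=\delta^{1/2}(h)\sigma(h)(\mathcal D f_0)(g)$; combined with the unitarity of $\sigma$ this gives $\|(\mathcal D f_0)(hg)\|_V^2=\delta(h)\|(\mathcal D f_0)(g)\|_V^2$, so $\|\mathcal D f_0\|_V^2\,dg$ descends to a well-defined continuous density on $H\backslash G$. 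Since $f_0$ is smooth on $G$, the map $g\mapsto\|(\mathcal D f_0)(g)\|_V$ is continuous, and compactness of $H\backslash G$ forces
\[
\int_{H\backslash G}\|(\mathcal D f_0)(g)\|_V^2\,dg<\infty.
\]
Poulsen's Theorem now yields $f_0\in\Ind_H^G(\sigma)^\infty$, hence $f\in\Ind_H^G(\sigma\otimes\chi)^\infty$.

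The only subtle point is the last one, namely verifying that the descended density $\|\mathcal D f_0\|_V^2\,dg$ on $H\backslash G$ is genuinely continuous with compact support; this is a standard partition-of-unity check once one knows that $\mathcal D f_0$ is continuous $G\to V$ and transforms correctly under left $H$-translation, and so does not present a real obstacle beyond careful bookkeeping.
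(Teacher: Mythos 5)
Your argument is correct and is precisely the intended one: the paper states this corollary without proof as an ``immediate consequence'' of Poulsen's Theorem, and your reduction via $\chi'$ to the unitary case, the observation that left-$G$-invariant operators preserve the $(\delta^{1/2}\sigma)$-equivariance so that $\lVert\mathcal{D}f_0\rVert_V^2\,dg$ descends to $H\backslash G$, and the use of compactness to get square-integrability fill in exactly the details the authors had in mind. No gaps.
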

%\begin{remark}
%Note that $f\in W^\infty$ is infinitely differentiable as a function with values in the Hilbert space $V$ if and only if
%it is infinitely differentiable as a function with values in the \Fre space $V^\infty$ (see \cite[Proposition 1.2]{MR0310137}).
%\end{remark}

\subsection{Representations of $GL(n)$-notation}
Let $F$ be either $\rr$ or $\cc$ and let $G_n=GL(n,F)$. Let $K=K_n$ be the standard maximal compact subgroup of $G_n$,
i.e. $O(n)$ if $F=\rr$ and $U(n)$ if $F=\cc$.

For a Hilbert representation $(\pi,V)$ of $G_n$ and $s\in \cc$ we denote by $(\abs{\ }^s\pi,V)$ the Hilbert representation on
the same space $V$ given by $g\mapsto \abs{\det g}^s\pi(g)$.

Let $(n_1,\dots,n_k)$ be a decomposition of $n$ and let $P=MU$ be the standard parabolic subgroup of $G_n$ consisting
of matrices in upper triangular block form, where
\[
M=\{\diag(m_1,\dots,m_k):m_i\in G_{n_i},\,i=1,\dots,k\}
\]
is the standard Levi subgroup of $P$ and $U$ is its unipotent radical.
Let $(\sigma_i,V_i)$ be a Hilbert representation of $G_{n_i}$, $i=1,\dots,k$ and let
$(\sigma,V)=(\sigma_1\otimes \cdots \otimes \sigma_k,V_1\otimes \cdots V_k)$ be the associated Hilbert representation of $M$. We also view $(\sigma,V)$ as a representation of $P$ where $U$ acts trivially. We use the following standard notation for normalized parabolic induction to $G_n$
\[
    \sigma_1 \times\cdots\times \sigma_k=\Ind_P^{G_n}(\sigma).
\]
For $\varphi\in \Ind_P^{G_n}(\sigma)$ let
\[
    \varphi_s(g)=\left[\prod_{i=1}^k \abs{\det m_i}^{s_i} \right]\varphi(g), \ \ \ g=umk\in G_n, u\in U, m=\diag(m_1,\dots,m_k)\in M,\, k\in K_n.
\]
We further associate to $\sigma$ a family $I(\sigma,s)$ of induced representations
parameterized by $s=(s_1,\dots,s_k)\in \cc^k$ realized in the underlying vector space of $\Ind_P^{G_n}(\sigma)$.
The representation $I(\sigma,s)$ is defined by
\[
    (I(g,\sigma,s)\varphi)_s(x)=\varphi_s(xg), \ \ \ \varphi\in\Ind_P^{G_n}(\sigma),\ g,\,x\in G_n.
\]
We have
\[
    I(\sigma,s)\simeq \abs{\ }^{s_1}\sigma_1 \times \cdots \times \abs{\ }^{s_k}\sigma_k
\]
 and the underlying space for $I(\sigma,s)^\infty$ is independent of $s$ (as explained in Section \ref{subsec:Analvec}).

\subsection{The unitary dual of $GL(n)$ and the $SL(2)$-type} \label{subsec:ClassPart}
The unitary dual $\udg$ of $G_n$ was classified by Vogan in \cite{MR827363}.
In \cite{MR870688}, Tadic classified the unitary dual of $GL(n)$ over a $p$-adic field
and expressed the classification in a uniform language for both the Archimedean and non-Archimedean cases.
We recall the classification as it appears in \cite[Theorem D]{MR870688}. (As noted in [ibid.] Tadic' Theorem D is also valid in the Archimedean case, see also \cite{MR2537046}.)

Let $\delta\in \ud{G_r}$ be square-integrable (thus $r=1$ if $F=\cc$ and $r\in\{1,2\}$ if $F=\rr$).
Denote by $U(\delta,t)$ the unique irreducible
quotient of
\[
|\ |^{\frac{t-1}2}\delta \times |\ |^{\frac{t-3}2}\delta \times\cdots \times |\ |^{\frac{1-t}2}\delta
\]
and for $\alpha\in \rr$,
$\abs{\alpha}<\frac12$ let
\[
\pi(\delta,t,\alpha)=|\ |^{\alpha}U(\delta,t) \times
|\ |^{-\alpha} U(\delta,t).
\]
For $r=1$ the representation $U(\delta,t)$ is one dimensional. For $r=2$ it was constructed in \cite{MR695900} using the
theory of automorphic forms. Later it was given an explicit
Hilbert space model in \cite{MR1062967}.

Let $B$ be the set of all representations of the form $U(\delta,t)$ or $\pi(\delta,t,\alpha)$ as above.
Then for any $\pi_1,\dots,\pi_k\in B$ the representation $\pi_1\times\cdots\times \pi_k\in\udg$ for an appropriate $n$ and any $\pi\in \udg$ is of this form for a uniquely determined multi-set $\{\pi_1,\dots,\pi_k\}$
in $B$.

In particular, for any $\pi\in \ud{G_n}$ there exist integers $k_1,\dots,k_m$, $t_1,\dots,t_m$, square integrable representations $\delta_i \in \ud{G_{k_i}}$ and $-\frac12<\alpha_i<\frac12$ such that
\[
    \pi=\abs{\det}^{\alpha_1}U(\delta_1,t_1) \times \cdots \times \abs{\det}^{\alpha_m}U(\delta_m,t_m).
\]
The following is therefore immediate from \cite[Proposition 1.9]{MR1026752}.
\begin{lemma}\label{lemma: gen irr}
Let $\pi_i\in \ud{G_{n_i}}$, $i=1,2$. Then the set
\[
\{s\in \cc: \pi_1 \times \abs{\det}^s \pi_2 \text{ is reducible}\}
\]
is discrete in $\cc$.
\end{lemma}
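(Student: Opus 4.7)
The plan is to deduce the lemma directly from Tadi\'c's Proposition~1.9 of \cite{MR1026752}, as indicated by the attribution.

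Using the classification recalled in Section~\ref{subsec:ClassPart}, each irreducible unitary representation $\pi_i$ may be written as a product
\[
\pi_i = \abs{\det}^{\alpha_{i,1}} U(\delta_{i,1}, t_{i,1}) \times \cdots \times \abs{\det}^{\alpha_{i,m_i}} U(\delta_{i,m_i}, t_{i,m_i})
\]
of irreducible building blocks from $B$, each of which itself lies in the unitary dual of the corresponding $GL$. By induction in stages, $\pi_1 \times \abs{\det}^s \pi_2$ is then realized, for every $s\in\cc$, as normalized parabolic induction from a \emph{fixed} block-diagonal Levi of $G_n$ of the representation $\pi_1 \otimes \abs{\det}^s\pi_2$, with $s$ entering only as a holomorphic shift on the factors coming from $\pi_2$.

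For such a one-parameter holomorphic family of parabolic inductions from an irreducible representation of a Levi subgroup, the associated long intertwining operator (in the sense of Knapp--Stein) extends meromorphically in $s$ and is generically invertible; reducibility of the induced representation can then occur only at poles of this operator or of its inverse, and these poles form a discrete subset of $\cc$. This is precisely the content of Tadi\'c's Proposition~1.9 in \cite{MR1026752}. The only point requiring care is to confirm that this proposition, formulated in a framework common to the Archimedean and non-Archimedean cases, applies verbatim to the unitary representations at hand; the paper's earlier remarks on the Archimedean validity of Tadi\'c's Theorem~D indicate that this is indeed routine, and I anticipate no substantive analytic obstacle beyond this bookkeeping.
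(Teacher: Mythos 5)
Your proposal follows the same route as the paper, which in fact offers no argument beyond the sentence preceding the lemma: decompose $\pi_1$ and $\pi_2$ into twisted Speh blocks $\abs{\det}^{\alpha}U(\delta,t)$ via the Tadi\'c classification, so that $\pi_1\times\abs{\det}^s\pi_2$ becomes a product of such blocks with $s$ entering holomorphically in some of the exponents, and then cite \cite[Proposition 1.9]{MR1026752} for generic irreducibility of such products. Two caveats on your gloss, though. First, the citation is to M{\oe}glin--Waldspurger, not Tadi\'c, and the blocks $\abs{\det}^{\alpha}U(\delta,t)$ with $\alpha\ne 0$ are irreducible but not unitary, so they do not ``lie in the unitary dual''; neither slip affects the argument. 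Second, and more substantively, your stated mechanism --- that reducibility of $\Ind_P^{G}(\sigma_s)$ can occur only at poles of the long intertwining operator or of its inverse --- is not a valid general principle: invertibility of the standard intertwining operator between the two associate inductions does not by itself preclude reducibility (an induced representation can split as a direct sum while all such operators are isomorphisms), and this is not what the cited proposition asserts. What the proposition actually supplies is the concrete statement that a product of twisted Speh representations is irreducible unless the twist parameters satisfy one of finitely many explicit linear relations, which immediately gives discreteness in $s$. Since your proof ultimately rests on the citation rather than on the intertwining-operator heuristic, it lands where the paper does; just replace the heuristic by the correct content of the cited result.
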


A partition of $n$ is a multi-set of positive integers adding up to $n$. By abuse of notation we will sometimes denote a partition $\lambda$ as a tuple $(n_1,\dots,n_k)$ but we keep in mind that order is irrelevant.
The integers $n_1,\dots,n_k$ are referred to as the parts of $\lambda$.
The transpose partition $\lambda^t$ is the partition $(m_1,\dots,m_l)$ where $m_i=\#\{j: 1\le j\le k,\,i\le n_j\}$ ($l$ is the maximal integer so that $\{j: 1\le j\le k,\,l\le n_j\}$ is not empty). If $\lambda$ and $\mu$ are partitions their union (as a multi-set)
is denoted by $(\lambda,\mu)$.
We call a partition even if all its parts are even and odd if all
its parts are odd.
For two natural numbers $r$ and $n$ let
\[\left\langle n\right\rangle_{r}=\overbrace{(n,\dots,n)}^{r}
\]
be the partition of $nr$ with $r$ equal parts.

The $SL(2)$-type associated to $\pi\in\udg$ is denoted by $\vv(\pi)$ and characterized by the following properties.
For any $\delta\in \ud{G_r}$  square integrable, $0 < \alpha <\frac12$, $\pi_1\in \ud{G_{n_1}}$ and $\pi_2\in \ud{G_{n_2}}$ we have
\begin{enumerate}
\item $\vv(U(\delta,n))=\left\langle n\right\rangle_r$;
\item $\vv(\pi(U(\delta,n),\alpha)))=\left\langle n\right\rangle_{2r}$;
\item $\vv(\pi_1\times \pi_2)=(\vv(\pi_{1}),\vv(\pi_{2}))$.
\end{enumerate}

\begin{defn}
A representation $\pi\in \udg$ is called even if
$\vv(\pi)$ is even and odd if $\vv(\pi)$ is odd. We denote by
$r(\pi)$ the number of odd parts in $\vv(\pi)$.
\end{defn}
Note that a product of two even representations is even. The following statement is straightforward from the definitions and the classification of $\udg$.
\begin{cor}
\label{cor:DecompEvenOdd} Let $\pi\in\udg$.
There is a decomposition $n=k+l$, $k,\,l \ge 0$,
$\pi_{e}\in\widehat{G}_{k}$ an even representation and $\pi_{o} \in\widehat{G}_{l}$ an odd representation, uniquely determined up to isomorphism, such that $\pi=
\pi_e\times\pi_o$.
\end{cor}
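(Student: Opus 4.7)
The plan is to read the statement directly off Tadi\'c's classification of $\ud{G_n}$ combined with properties (1)--(3) of the $SL(2)$-type.

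The key preliminary observation is that every building block $\tau\in B$ is \emph{either} even \emph{or} odd. Indeed, for $\tau=U(\delta,t)$ with $\delta\in\ud{G_r}$ square integrable, property (1) gives $\vv(\tau)=\langle t\rangle_r$, whose parts are all equal to $t$; similarly, for $\tau=\pi(\delta,t,\alpha)$, property (2) gives $\vv(\tau)=\langle t\rangle_{2r}$, again with all parts equal to $t$. In either case, $\tau$ is even when $t$ is even and odd when $t$ is odd.

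For existence, I would apply Tadi\'c's classification to write $\pi\simeq \tau_1\times\cdots\times\tau_k$ with each $\tau_i\in B$, and then reorder so that $\tau_1,\dots,\tau_j$ are the even blocks and $\tau_{j+1},\dots,\tau_k$ are the odd ones. Set
\[
    \pi_e=\tau_1\times\cdots\times\tau_j, \qquad \pi_o=\tau_{j+1}\times\cdots\times\tau_k.
\]
Both lie in the unitary dual by Tadi\'c's theorem, and $\pi\simeq\pi_e\times\pi_o$. Iterating property (3) expresses $\vv(\pi_e)$ as the union of the partitions $\vv(\tau_1),\dots,\vv(\tau_j)$, each of which is even by the preliminary observation; hence $\vv(\pi_e)$ is even. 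The same argument shows $\vv(\pi_o)$ is odd.

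For uniqueness, suppose $\pi\simeq\pi_e'\times\pi_o'$ with $\pi_e'$ even and $\pi_o'$ odd. Decompose each factor into building blocks via Tadi\'c. If some building block appearing in $\pi_e'$ were odd, its (nonempty, all-odd) partition would contribute odd parts to $\vv(\pi_e')$ by property (3), contradicting evenness of $\pi_e'$; hence every building block of $\pi_e'$ is even, and symmetrically every building block of $\pi_o'$ is odd. Thus the multisets of building blocks of $\pi_e$ and $\pi_e'$ (resp.~$\pi_o$ and $\pi_o'$) both coincide with the multiset of even (resp.~odd) building blocks in the Tadi\'c decomposition of $\pi$, and uniqueness in Tadi\'c's theorem yields $\pi_e\simeq\pi_e'$ and $\pi_o\simeq\pi_o'$. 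There is no real obstacle here; the only point requiring care is checking that the parity of $\tau_i$ is intrinsic to $\tau_i$, which is immediate since $\vv$ is defined on isomorphism classes.
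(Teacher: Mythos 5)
Your proof is correct and is precisely the argument the paper intends: the paper gives no written proof, asserting only that the statement "is straightforward from the definitions and the classification of $\udg$," and your write-up fills in exactly those details (each block in $B$ has constant-part $SL(2)$-type, so is even or odd; group blocks by parity for existence; use uniqueness of the multiset of blocks for uniqueness).
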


\subsection{The highest derivative}
\label{subsec:DerDepth}

The following convention will be used whenever convenient. For $n<m$ we view $G_n$ as a subgroup
of $G_m$ through the imbedding $g\mapsto \diag(g,I_{m-n})$. This convention will freely be used throughout the paper
for subgroups of $G_n$ without further notice.

For subgroups $A_i$ of $G_{k_i}$, $i=1,2$, by $(A_1 \times A_2) \ltimes M_{k_1\times k_2}(F)$ we mean the subgroup
of $G_{k_1+k_2}$ consisting of matrices of the form
\[
\diag(a_1,a_2)\ltimes X:=\begin{pmatrix}
a_1 & X \\ 0 & a_2
\end{pmatrix}, \ \ \ a_i\in A_i,\,i=1,2,\,X\in M_{k_1\times k_2}(F).
\]
In accordance with our convention, when $A_2=\{e\}$ we also set $A_1 \ltimes  M_{k_1\times k_2}(F)=
(A_1 \times A_2) \ltimes M_{k_1\times k_2}(F)$.

For a representation $(\sigma,V)$ of $A_1\times A_2$ and a character $\chi$ of $M_{k_1\times k_2}(F)$
we denote by $(\sigma\ltimes \chi,V)$ the representation of $(A_1 \times A_2)\ltimes M_{k_1\times k_2}(F)$ defined by
\[
(\sigma \ltimes \chi)(\diag(a_1,a_2)\ltimes X)=\chi(X) \sigma(\diag(a_1,a_2)),\ \ \ a_i\in A_i,\,i=1,2,\,X\in M_{k_1\times k_2}(F).
\]

We recall the Archimedean analog, as formulated in
\cite{MR1026329}, of the Bernstein-Zelevinsky notion of highest derivative
\cite{MR0579172}.

%\label{subsec:Mirab}

Denote by $P_n$ the \emph{\textquotedblleft
mirabolic\textquotedblright} subgroup of $G_n$ consisting of matrices with last row
$e_n:=(0,0,...,0,1)$, i.e. $P_n=G_{n-1} \ltimes F^{n-1}$.
Note that
\[
\Delta_{P_n}(g)=\abs{\det g},\,g\in P_n.
\]

The starting point of the Archimedean theory of highest derivatives is the following
\begin{thm}
\label{BarKir} Let $\pi\in\widehat{G_{n}}$, then $\pi|_{P_{n}}$ is irreducible.
\end{thm}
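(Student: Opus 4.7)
The plan is to prove the theorem by induction on $n$, applying the Mackey machine to the semidirect product structure $P_n = G_{n-1}\ltimes F^{n-1}$. The base case $n=1$ is vacuous since $P_1$ is trivial.

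For the inductive step, I would analyse the $F^{n-1}$-spectral decomposition of the Hilbert representation $\pi$. The Pontryagin dual $\widehat{F^{n-1}}$ is identified $G_{n-1}$-equivariantly with $F^{n-1}$, and the $G_{n-1}$-action has precisely two orbits: the origin $\{0\}$ and its complement, with the stabilizer of a chosen non-zero character $\psi_0$ being the mirabolic subgroup $P_{n-1}\subset G_{n-1}$. Any closed $P_n$-invariant subspace of $\pi$ is simultaneously $G_{n-1}$- and $F^{n-1}$-invariant, so its $F^{n-1}$-spectral support is a $G_{n-1}$-invariant measurable subset of $\widehat{F^{n-1}}$, hence is contained in one of the two orbits. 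If the spectrum is concentrated at the origin, then $F^{n-1}$ acts trivially on $\pi$; since the $G_n$-normal closure of $F^{n-1}$ contains $SL(n,F)$, the representation $\pi$ must be a character and $\pi|_{P_n}$ is one-dimensional, hence trivially irreducible. Otherwise Mackey's theory for the regular semidirect product identifies
\[
\pi|_{P_n} \simeq \Ind_{P_{n-1}\ltimes F^{n-1}}^{P_n}(\sigma\otimes\psi_0)
\]
as a unitary $P_n$-representation, for some unitary representation $\sigma$ of $P_{n-1}$.

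The main obstacle is then to prove that $\sigma$ is $P_{n-1}$-irreducible, so that the Mackey irreducibility criterion yields $P_n$-irreducibility of the induced representation. The difficulty is that $\sigma$ is a priori only a $P_{n-1}$-module, not of the form ``restriction from $G_{n-1}$'' to which the inductive hypothesis directly applies. This is exactly the content of the Archimedean highest-derivative theory of Sahi \cite{MR1026329}: one constructs, canonically from $\pi$, an irreducible unitary representation $\pi'\in\widehat{G_{n-1}}$ (the ``first derivative'' of $\pi$ in the Bernstein--Zelevinsky sense, translated to the Archimedean setting) such that $\sigma$ is unitarily equivalent to $\pi'|_{P_{n-1}}$. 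Granting this identification, the inductive hypothesis applied to $\pi'$ delivers the irreducibility of $\sigma$, and with it the irreducibility of $\pi|_{P_n}$.

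An alternative route, avoiding the general derivative theory, is to combine Tadic's classification recalled in Section \ref{subsec:ClassPart} with an analytic continuation argument. One first verifies the claim on the building blocks $U(\delta,t)$ using the explicit Hilbert realization of \cite{MR1062967}, and on complementary series blocks $\pi(\delta,t,\alpha)$ by analysing the induced realization; Lemma \ref{lemma: gen irr} then allows passage from the generic (irreducible) parameters of products $U(\delta_1,t_1)\times\cdots\times U(\delta_m,t_m)$ back to the unitary parameters $\alpha_i\in(-\tfrac12,\tfrac12)$, provided one establishes continuity of the property ``$\pi|_{P_n}$ is irreducible'' along the analytic family. This alternative bypasses derivative theory at the cost of a delicate continuity argument in the Mackey decomposition.
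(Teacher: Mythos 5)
The paper does not actually prove this theorem: it is Kirillov's conjecture, quoted with references (Bernstein for the $p$-adic case, Sahi \cite{MR1026329} for $F=\cc$, Baruch \cite{MR1999922} for $F=\rr$), and those proofs are long and use heavy machinery. Your Mackey reduction is the standard and essentially correct first step: if $\pi$ is not a character, the spectral measure of $F^{n-1}$ is concentrated on the open $G_{n-1}$-orbit and $\pi|_{P_n}\simeq \Ind_{P_{n-1}\ltimes F^{n-1}}^{P_n}(\sigma\otimes\psi_0)$, so everything reduces to the $P_{n-1}$-irreducibility of $\sigma$. (One small repair: a $G_{n-1}$-invariant measurable subset of $\widehat{F^{n-1}}$ can meet both orbits, so you must separately rule out a nonzero \emph{proper} subspace of $F^{n-1}$-fixed vectors, e.g.\ by Howe--Moore vanishing of matrix coefficients, before concluding the spectrum lives on a single orbit.)

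The genuine gap is the step you yourself label the ``main obstacle.'' Your proposed resolution --- that Sahi's highest-derivative theory supplies $\pi'\in\widehat{G_{n-1}}$ with $\sigma\simeq\pi'|_{P_{n-1}}$ --- is circular: as stated explicitly in Section \ref{subsec:DerDepth}, the adduced-representation formalism ($\pi|_{P_n}\simeq I^{d-1}E(A(\pi))$ and the resulting description of $\widehat{P_n}$) is \emph{deduced from} Theorem \ref{BarKir} together with Mackey theory, not the other way around. Moreover, even granting that theory, the fiber representation is $\sigma=I^{d-2}E(A(\pi))$, which need not be the restriction to $P_{n-1}$ of any irreducible unitary representation of $G_{n-1}$; so the inductive hypothesis does not apply to it, and what remains to be shown --- that the Mackey fiber of an irreducible unitary representation of $G_n$ is $P_{n-1}$-irreducible --- is the entire content of the conjecture. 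The alternative route also fails as sketched: irreducibility is not preserved under limits of an analytic family (limits of irreducibles are routinely reducible), so the ``continuity of irreducibility'' you would need cannot be invoked, and Lemma \ref{lemma: gen irr} only controls reducibility as a $G_n$-module at generic parameters, not as a $P_n$-module at the unitary point. The actual proofs rest on genuinely different inputs: Sahi's argument for $\cc$ exploits the one-dimensionality of the building blocks and explicit models, while Baruch's argument for $\rr$ proves that $P_n$-invariant eigendistributions on $G_n$ are transpose-invariant, an Archimedean analogue of Bernstein's $p$-adic method.
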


\begin{remark*}
The result was conjectured by Kirillov.
In the p-adic case it was proved in \cite{MR748505}, in the
complex case in \cite{MR1026329} and finally in the real case in
\cite{MR1999922}.
\end{remark*}

For a Hilbert representation $(\sigma,V)$ of $G_n$ let $E(\sigma)=\sigma\ltimes \triv_{F^n}$ be the associated representation of $P_{n+1}$ on the same space $V$.

For a Hilbert representation $(\tau,V)$ of $P_n$ let
\[
I(\tau)=\Ind_{P_n \ltimes F^n}^{P_{n+1}}(\tau\ltimes\psi(e_n \cdot)).
\]

Note that $E|_{\ud{G_n}}: \ud{G_n} \rightarrow \ud{P_{n+1}}$ and $I|_{\ud{P_n}}: \ud{P_n}\rightarrow \ud{P_{n+1}}$.

Based on Theorem \ref{BarKir}  and Mackey theory Sahi observed that
for $\pi\in \ud{G_n}$ there exists a unique integer $d$, $1\le d \le n$ and a unique $\sigma\in \ud{G_{n-d}}$ such that
\[
\pi|_{P_n}\simeq I^{d-1}E(\sigma).
\]
The representation $\sigma$ is called the \emph{highest derivative} (or \emph{adduced}) of $\pi$
and is denoted by $A(\pi)$.
The integer $d$ is called the \emph{depth} of $\pi$ and we denote it by $\depth(\pi)$.

Recursively we define $A^{j+1}(\pi)=A(A^j(\pi))$ as long as $A^j(\pi)$ is a representation of $G_i$ for some integer $i\ge 1$.
Let $k$ be such that $A^k(\pi)$ is the trivial representation of $G_0$.
The \emph{depth sequence} of $\pi$ is defined to be
\begin{equation}
\dseq(\pi)=(d_1,\dots,d_k) \text{
where }d_{j+1}=\depth(A^j\pi)  \label{=d_pi},\ j=0,\dots,k-1.
\end{equation}

The following Theorem follows from \cite[Theorem B]{GS}.
\begin{theorem}%[\cite{GS}, Theorem A]
\label{thm:GSMain}
Let $\pi\in \ud{G_n}$ and $\dseq(\pi)=(d_1,\dots,d_k)$ then $d_1 \ge \cdots \ge d_k$ and viewed as a partition $\dseq(\pi)$ satisfies
\begin{equation}\label{eq: transpose partition}
\vv(\pi)=\dseq(\pi)^t.
\end{equation}
\end{theorem}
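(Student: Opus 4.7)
The statement has two components: monotonicity of $\dseq(\pi)$ and the transpose identity $\vv(\pi) = \dseq(\pi)^t$. My strategy is to proceed by induction on $n$, using the cited result \cite[Theorem B]{GS} to control a single application of the highest derivative operation $A$.

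\textbf{First step.} I would extract from \cite[Theorem B]{GS} the precise behavior of $A$ at the level of $SL(2)$-types. The natural form of such a statement, consistent with the multiplicativity $\vv(\pi_1 \times \pi_2) = (\vv(\pi_1), \vv(\pi_2))$ and with the formulas $\vv(U(\delta,t)) = \langle t \rangle_r$ and $\vv(\pi(\delta,t,\alpha)) = \langle t \rangle_{2r}$ of Section \ref{subsec:ClassPart}, is that $A$ acts on the Young diagram of $\vv(\pi)$ by removing its leftmost column. Concretely: $\depth(\pi)$ equals the number of (nonzero) parts of $\vv(\pi)$, i.e.\ the largest part of $\vv(\pi)^t$, and $\vv(A(\pi))$ is the partition obtained from $\vv(\pi)$ by subtracting $1$ from each part and discarding zeros.

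\textbf{Second step (induction on $n$).} The base case $n=0$ is vacuous. For the inductive step, fix $\pi \in \ud{G_n}$ and set $\mu = \vv(\pi)$, writing $\mu^t = (m_1, \ldots, m_l)$. By the first step, $d_1 = \depth(\pi) = m_1$, while $\vv(A(\pi))$ is the partition $\mu'$ whose transpose is $(m_2, \ldots, m_l)$. Since $A(\pi) \in \ud{G_{n - d_1}}$, the induction hypothesis yields $\dseq(A(\pi)) = (\mu')^t = (m_2, \ldots, m_l)$, and this tuple is already weakly decreasing. By the recursive definition \eqref{=d_pi}, we conclude
\[
\dseq(\pi) = (d_1, \dseq(A(\pi))) = (m_1, m_2, \ldots, m_l) = \vv(\pi)^t.
\]
Monotonicity $d_1 \geq d_2$ follows because $(m_1, \ldots, m_l)$ is a partition (columns of the diagram of $\mu$ are weakly decreasing in height), and the full weak decrease then follows by induction.

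\textbf{Main obstacle.} Essentially all the nontrivial content sits in the cited input \cite[Theorem B]{GS}: the translation "highest derivative $=$ column removal on the diagram of $\vv$" is exactly what makes the recursion degenerate into partition transposition. Once that dictionary is in place, the argument above is purely combinatorial bookkeeping, and the two conclusions of Theorem \ref{thm:GSMain} drop out simultaneously from the same induction.
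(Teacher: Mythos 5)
Your proposal is correct and follows the same route as the paper, which offers no argument beyond the citation: Theorem \ref{thm:GSMain} is stated there as an immediate consequence of \cite[Theorem B]{GS}. Your reading of that input (highest derivative $=$ removal of the first column of the diagram of $\vv(\pi)$, so $\depth(\pi)$ is the number of parts of $\vv(\pi)$) is the intended one, and the inductive bookkeeping showing $\dseq(\pi)=\vv(\pi)^t$, hence $\vv(\pi)=\dseq(\pi)^t$ with $d_1\ge\cdots\ge d_k$, is accurate.
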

\begin{corollary}\label{cor:DerOddEven}
Let $\pi\in \ud{G_n}$. Then
\begin{enumerate}
\item $\depth(\pi)$ is the number of parts in $\vv(\pi)$.
In particular, $\depth(\pi)\ge r(\pi)$ and equality holds if and only if $\pi$ is odd.
\item If $\pi$ is odd then
$A(\pi)$ is even.
\end{enumerate}
\end{corollary}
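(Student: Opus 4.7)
The plan is to deduce both claims from Theorem \ref{thm:GSMain} by a direct combinatorial manipulation. Write $\dseq(\pi)=(d_1,\dots,d_k)$ with $d_1\ge\cdots\ge d_k$. Unwinding the definition in \eqref{=d_pi}, we have $d_1=\depth(\pi)$, and the identity $A^{j}(A(\pi))=A^{j+1}(\pi)$ shows that removing the leading entry yields the depth sequence of the highest derivative:
\[
\dseq(A(\pi))=(d_2,\dots,d_k).
\]

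For part (1), the key observation is that for any partition $\mu=(d_1,\dots,d_k)$ with $d_1\ge\cdots\ge d_k\ge 1$, the transpose $\mu^t$ has exactly $d_1$ parts, since its $i$-th entry $\#\{j:d_j\ge i\}$ is positive precisely for $1\le i\le d_1$. Applying this to $\mu=\dseq(\pi)$ and invoking \eqref{eq: transpose partition} shows that the number of parts of $\vv(\pi)$ equals $\depth(\pi)$. The bound $\depth(\pi)\ge r(\pi)$ and the equality criterion follow at once from comparing the total number of parts with the number of odd parts.

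For part (2), the strategy is to translate the parity condition on the parts of $\vv(\pi)$ into a condition on the entries of the depth sequence via the transpose. For a partition $\lambda=\mu^t$ the multiplicity of a value $v\ge 1$ in $\lambda$ equals $\mu_v-\mu_{v+1}$. Hence $\lambda$ has only odd parts if and only if $\mu_{2m}=\mu_{2m+1}$ for every $m\ge 1$, while $\lambda$ has only even parts if and only if $\mu_{2m-1}=\mu_{2m}$ for every $m\ge 1$. If $\pi$ is odd, applying the first criterion to $\mu=\dseq(\pi)$ yields $d_2=d_3$, $d_4=d_5$, and so on; these are exactly the equalities required by the second criterion applied to $\mu'=\dseq(A(\pi))=(d_2,\dots,d_k)$, so $\vv(A(\pi))=(\mu')^t$ has only even parts, i.e.\ $A(\pi)$ is even.

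Both parts reduce to elementary bookkeeping with Young diagrams once Theorem \ref{thm:GSMain} is granted, so I do not foresee a substantive obstacle; the only point requiring care is correctly tracking the indexing of the transpose and of the shifted depth sequence.
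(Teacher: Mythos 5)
Your proposal is correct and follows essentially the same route as the paper: both parts are deduced from Theorem \ref{thm:GSMain} together with the observation that $\dseq(A(\pi))=(d_2,\dots,d_k)$, followed by elementary bookkeeping with transposed partitions. The only cosmetic difference is in part (2), where the paper notes directly that deleting $d_1$ from $\dseq(\pi)$ turns each part $m>1$ of $\vv(\pi)$ into $m-1$ (so odd parts become even), whereas you encode parity via the multiplicity formula $\mu_v-\mu_{v+1}$; the two computations are equivalent.
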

\begin{proof}
We use the notation of the Theorem. It is clear that $d_1$ is the number of parts in $\dseq(\pi)^t$.
Since by definition $d_1=\depth(\pi)$ the first part follows from \eqref{eq: transpose partition}. It follows from the definitions that $\dseq(A(\pi))=(d_2,\dots,d_k)$. Applying \eqref{eq: transpose partition} again we obtain that
$\vv(A(\pi))=\dseq(A(\pi))^t$ consists of parts of the form $m-1$ where $1<m$ is a part of $\dseq(\pi)^t=\vv(\pi)$. The second part follows.
\end{proof}

Let $n=m+r$. For Hilbert representations $\pi$ of $G_m$ and $\tau$ of $P_r$ we set
\[
\pi\times \tau=\Ind_{(G_m \times P_r) \ltimes M_{m\times r}(F)}^{P_n}((\pi \otimes \tau) \ltimes \triv_{M_{m\times r}(F)}).
\]

%We adjust to the smooth setting some properties of the maps $E$ and $I$ provided in \cite{MR1026329}.
\begin{lemma}\label{lem: sahi}
Let $s\in \cc$ and consider the Hilbert representations $\pi$ of $G_m$, $\sigma$ of $G_r$ and $\tau$ of $P_r$.
We have
\begin{enumerate}
\item \label{Es}$E(\abs{\ }^s \pi)=\abs{\ }^s E(\pi)$;
\item \label{Is}$I(\abs{\ }^s\tau)\simeq \abs{\ }^s I(\tau)$;
\item \label{Eprod}$E(\pi \times \sigma)=\pi \times E(\sigma)$;
\item \label{Iprod}$I(\pi \times \tau)=\pi \times I(\tau)$.
\end{enumerate}
\end{lemma}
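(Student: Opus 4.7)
The plan is to unwind the definitions and apply induction in stages inside the semidirect product $P_{n+1}=G_n\ltimes F^n$. Part (1) is a direct verification: on $pv\in P_{n+1}$ with $p\in G_n$ and $v\in F^n$, both $E(\abs{\ }^s\pi)$ and $\abs{\ }^s E(\pi)$ act as $\abs{\det p}^s\pi(p)$, since the character $\abs{\det}^s$ is trivial on the unipotent factor $F^n$. Part (2) follows from the intertwining $f\mapsto \chi' f$ described in the paragraph preceding Corollary \ref{cor: smth ind twist}, applied with $\chi=\abs{\ }^s$ extended from the inducing subgroup $P_n\ltimes F^n$ to all of $P_{n+1}$.

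For (3) and (4) the key step is to identify, as subgroups of $P_{n+1}$,
\begin{align*}
P\ltimes F^n &= (G_m\times P_{r+1})\ltimes M_{m\times(r+1)},\\
((G_m\times P_r)\ltimes M_{m\times r})\ltimes F^n &= (G_m\times(P_r\ltimes F^r))\ltimes M_{m\times(r+1)},
\end{align*}
where $P=(G_m\times G_r)\ltimes M_{m\times r}\subset G_n$ is the standard parabolic. Both identifications follow from a block-matrix computation using the Levi-type decomposition $F^n=F^m\oplus F^r$: the $F^m$-summand joins $M_{m\times r}$ as its last column to form $M_{m\times(r+1)}$, while $F^r$ is precisely the unipotent factor of $P_{r+1}=G_r\ltimes F^r$. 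With these in hand, for (3) I would write $\pi\times\sigma=\Ind_P^{G_n}(\pi\otimes\sigma)$ and apply induction in stages to obtain $E(\pi\times\sigma)=\Ind_{P\ltimes F^n}^{P_{n+1}}((\pi\otimes\sigma)\ltimes\triv)$; since $E(\sigma)=\sigma\ltimes\triv_{F^r}$, the definition of $\pi\times E(\sigma)$ unwinds to the same induced representation. For (4) the analogous argument, using induction in stages on both sides, reduces the claim to the observation that the character $\psi(e_n\cdot)$ on $F^n$ is trivial on $F^m$ and restricts to $\psi(e_r\cdot)$ on $F^r$.

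The main obstacle is not any deep input but the bookkeeping of modular characters: one must check that the $\delta^{1/2}$ factors used for normalized induction from the two different semidirect-product descriptions of each common subgroup coincide. This reduces to the identity $\Delta_{P_k}(g)=\abs{\det g}$ stated in Section \ref{subsec:DerDepth} together with the additivity of $\abs{\det\Ad}$ across the direct-sum decompositions of $F^n$ and $M_{m\times(r+1)}$, which ensures that the total $\delta^{1/2}$-twist contributed by regrouping $F^m\oplus F^r$ versus $M_{m\times r}\oplus F^m$ is invariant.
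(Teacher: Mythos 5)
Your proof is correct, and for parts \eqref{Eprod} and \eqref{Iprod} it takes a genuinely different (more self-contained) route than the paper. The paper disposes of \eqref{Es} by the same direct verification you give, and of \eqref{Is} by exhibiting the explicit intertwiner $f\mapsto f_s$ with $f_s(p)=\abs{\det p}^s f(p)$ --- which is exactly the specialization of the twisting isomorphism $f\mapsto\chi' f$ that you invoke, so there is no real difference there. For \eqref{Eprod} and \eqref{Iprod}, however, the paper simply cites \cite[Lemma 2.1 (ii) and (iii)]{MR1026329} and remarks that Sahi's proof goes through verbatim for Hilbert (rather than unitary) representations; you instead carry out the argument: identify $P\ltimes F^n$ and $(G_m\times(P_r\ltimes F^r))\ltimes M_{m\times(r+1)}(F)$ as the same subgroup of $P_{n+1}$ under the splitting $F^n=F^m\oplus F^r$ of the last column, apply induction in stages on both sides, match the characters (triviality of $\abs{\det}^s$ and of $\psi(e_n\cdot)$ on the relevant unipotent pieces, and $\psi(e_n\cdot)|_{F^r}=\psi(e_r\cdot)$), and check that the $\delta^{1/2}$ normalizations agree. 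This is the natural Mackey-theoretic computation and is essentially what underlies the cited lemma, so what your version buys is self-containedness at the cost of the modular-character bookkeeping, which you correctly flag as the only delicate point; it does work out, e.g.\ $\Delta_{P\ltimes F^n}(p)=\Delta_P(p)\abs{\det p}$ and $\Delta_{P_{n+1}}(p)=\abs{\det p}$ give ratio $\Delta_P/\Delta_{G_n}$, as needed for $\Ind_{P\ltimes F^n}^{P_{n+1}}(\rho\ltimes\triv)=E(\Ind_P^{G_n}\rho)$. Two cosmetic caveats: the statement asserts literal equality in \eqref{Eprod} and only isomorphism in \eqref{Iprod}, and your induction-in-stages identifications produce canonical isomorphisms rather than equalities of function spaces, so you should either realize the induced spaces compatibly or weaken the equalities to $\simeq$; and in \eqref{Iprod} one should tacitly assume $r\ge 1$ so that $e_r$ and the $F^r$-factor make sense.
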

\begin{proof}
Part \eqref{Es} is straightforward. Indeed, the underlying representation space of both $E(\abs{\ }^s \pi)$ and $\abs{\ }^s E(\pi)$ is that of $\pi$ and the two actions are identical.
For part \eqref{Is} set $f_s(p)=\abs{\det p}^s f(p)$, $p \in P_n$. The map $f\mapsto f_s$ is an isomorphism from
$\abs{\ }^s I(\tau)$ to $I(\abs{\ }^s\tau)$.
Parts \eqref{Eprod} and \eqref{Iprod} are proved in \cite[Lemma 2.1 (ii) and (iii)]{MR1026329} when $\pi$, $\sigma$ and $\tau$ are unitary. The proof of [ibid.] is valid verbatim in the more general context of Hilbert representations.

\end{proof}

Let $S_{m,r}$ be the subgroup of $G_n$ defined by
$S_{m,r}=(G_m\times N_r) \ltimes M_{m\times r}(F)$.

\begin{proposition}\label{prop: proj k sigma}
Let $d \le n$, $Q=MU$ a standard parabolic subgroups of $G_{n-d}$ with its standard Levi decomposition
($M\simeq G_{m_1}\times\cdots\times G_{m_k}$), $\tau$ a non-zero unitary representation of $M$, $s\in \cc^k$ and $(\sigma,V)=\Ind_Q^{G_{n-d}}(\tau,s)$. Let $\pi=I^{d-1}E(\sigma)$.
\begin{enumerate}[(i)]
\item \label{trans ind} We have $\pi\simeq \Ind_{S_{n-d,d}}^{P_n}((\sigma \otimes \psi_d)\ltimes \triv_{M_{n-d\times d}(F)})$.
\item \label{equiv prop}There is a continuous linear transformation
$\pr_{d,\sigma}:\pi^\infty \rightarrow V^\infty$ that is not identically zero on any $P_n$-invariant subspace of $\pi^\infty$ and satisfies
\begin{equation}\label{eq: k equiv cond}
    \pr_{d,\sigma}(\pi(s)v)=\psi_d(n)\abs{\det g}^{\frac{d-1}2}\sigma(g) \pr_{d,\sigma}(v),\ \ \ v\in \pi^\infty \text{ and } s=
    \begin{pmatrix} g & X \\ 0  & n \end{pmatrix}\in S_{n-d,d}
\end{equation}
where $g\in G_{n-d},\,n\in N_d$ and $X\in M_{n-d \times d}(F)$.

%In particular, if $\sigma$ is irreducible then $\pr_{d,\sigma}$ has dense image.

\end{enumerate}
\end{proposition}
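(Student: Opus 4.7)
The plan is to prove (i) by induction on $d$ via transitivity of normalized induction, and then in (ii) to define $\pr_{d,\sigma}(f) := f(e)$ using the explicit realization from (i), verifying equivariance from the modular function computation and deducing all remaining properties from the induced structure.

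For (i), the base case $d=1$ is immediate: $S_{n-1,1} = G_{n-1} \ltimes F^{n-1} = P_n$, $N_1$ is trivial so $\psi_1 = \triv$, and the claim reduces to $E(\sigma) = E(\sigma)$. For the inductive step, assume $I^{d-2} E(\sigma) \simeq \Ind_{S_{n-d,d-1}}^{P_{n-1}}((\sigma \otimes \psi_{d-1}) \ltimes \triv)$, apply $I$, and invoke transitivity of normalized induction to obtain
\[
\pi \simeq \Ind_{S_{n-d,d-1} \ltimes F^{n-1}}^{P_n}\bigl(((\sigma \otimes \psi_{d-1}) \ltimes \triv) \ltimes \psi(e_{n-1}\cdot)\bigr).
\]
Now decompose $F^{n-1}$ (viewed as the last column of $P_n$) as $F^{n-d} \oplus F^{d-1}$: the first summand merges with $M_{(n-d)\times(d-1)}(F)$ into $M_{(n-d)\times d}(F)$, and the second combines with $N_{d-1}$ via the semidirect decomposition $N_d = N_{d-1} \ltimes F^{d-1}$ into $N_d$. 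Since $\psi(e_{n-1}\cdot)$ is trivial on $F^{n-d}$ and equals $\psi$ of the last coordinate on $F^{d-1}$, it supplies precisely the missing factor $\psi(n_{d-1,d})$ so that $\psi_{d-1} \otimes \psi(e_{n-1}\cdot)$ becomes $\psi_d$. The inducing subgroup is therefore $S_{n-d,d}$ and the inducing datum is $(\sigma \otimes \psi_d) \ltimes \triv$, completing the induction.

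For (ii), set $\pr_{d,\sigma}(f) := f(e)$; this is well-defined because by Corollary \ref{cor: smth ind twist} every $f \in \pi^\infty$ is an infinitely differentiable function $P_n \to V$. The equivariance follows from the induction transformation rule $f(s) = \delta^{1/2}(s)(\sigma \otimes \psi_d)(s) f(e)$ together with the modular function computation $\Delta_{P_n}(s) = \abs{\det g}$ and $\Delta_{S_{n-d,d}}(s) = \abs{\det g}^d$ (the latter from $\Ad(s)\cdot Z = g Z n^{-1}$ on $M_{(n-d)\times d}(F)$), giving $\delta^{1/2}(s) = \abs{\det g}^{(d-1)/2}$. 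Restricting the equivariance to $g \in G_{n-d}$ embedded as $\diag(g,I_d)$ yields $\sigma(g) f(e) = \abs{\det g}^{-(d-1)/2} f(g)$, which is smooth in $g$, showing $\pr_{d,\sigma}(f) \in V^\infty$; continuity $\pi^\infty \to V^\infty$ then follows because each Fr\'echet seminorm on $V^\infty$ is bounded by a seminorm of left-invariant derivatives of $f$ at $e$ along $G_{n-d}$, continuous on $\pi^\infty$ by Poulsen's theorem. Finally, if $W \subseteq \pi^\infty$ is a $P_n$-invariant subspace on which $\pr_{d,\sigma}$ vanishes, then for $f \in W$ and $g \in P_n$ we have $\pi(g)f \in W$, so $f(g) = (\pi(g)f)(e) = 0$, forcing $f = 0$ and $W = 0$. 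The main obstacle is the clean bookkeeping of the modular function computation on $S_{n-d,d}$ and the compatibility of the semidirect decompositions and character extensions with the normalization conventions at each iteration of $I$ in (i).
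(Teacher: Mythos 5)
Your overall strategy coincides with the paper's: part (i) by transitivity of induction, and part (ii) by taking $\pr_{d,\sigma}$ to be evaluation at the identity, with equivariance read off from the induction transformation rule and non-vanishing on $P_n$-invariant subspaces from translation-invariance. The modular-function bookkeeping and the observation that $\sigma(g)f(e)=\abs{\det g}^{-(d-1)/2}f(\diag(g,I_d))$ is infinitely differentiable in $g$ (hence $f(e)\in V^\infty$) are fine.

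There is, however, a genuine gap at the one technical point the proposition has to address: you justify that every $f\in\pi^\infty$ is an infinitely differentiable function $P_n\to V$ by citing Corollary \ref{cor: smth ind twist} for the induction from $S_{n-d,d}$. That corollary requires the inducing representation to be of the form (unitary representation of $H$) $\otimes$ (character of $H$ extending to a smooth function on $G$). Here the inducing datum is $(\sigma\otimes\psi_d)\ltimes\triv$ with $\sigma=\Ind_Q^{G_{n-d}}(\tau,s)$, which for general $s\in\cc^k$ is a non-unitary Hilbert representation, and its non-unitarity is carried by the character $\chi_s$ of the parabolic $Q\subset G_{n-d}$, not by a character of $S_{n-d,d}$ (unless $s_1=\cdots=s_k$). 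So the hypotheses of the corollary are not met and the citation does not go through as written. The paper repairs exactly this by inducing in stages from $Q_1=(Q\times N_d)\ltimes M_{n-d\times d}(F)$, where the inducing datum $\tau_1\otimes\chi_1$ \emph{is} unitary-times-character with $\chi_1$ extending smoothly to $P_n$ via the Iwasawa decomposition; Corollary \ref{cor: smth ind twist} then gives smoothness of $f$ as a $\tau$-valued function on $P_n$, and transitivity of induction plus Corollary \ref{cor:ParInd} (using compactness of $Q\backslash G_{n-d}$) converts this into the statement that elements of $\pi^\infty$ are smooth functions on $P_n$ valued in $V^\infty$. You need to insert this intermediate induction step (or an equivalent argument handling the non-unitary twist at the level of $Q$) before evaluation at the identity is legitimate.
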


\begin{proof}
Part \eqref{trans ind} follows by iteratively applying transitivity of induction.
For part \eqref{equiv prop} note that $V^\infty$, the space of smooth vectors for $\sigma$, is also the space of smooth vectors of
the representation $(\sigma \otimes \psi_k)\ltimes \triv_{M_{n-d\times d}(F)}$ of $S_{n-d,d}$.
Let
\[
\tau_1=(\tau \otimes \psi_d)\ltimes \triv_{M_{n-d\times d}(F)}
\]
be a unitary representation of the subgroup $Q_1:=(Q\times N_d) \ltimes M_{n-d\times d}(F)$ of $S_{n-d,d}$ and let
\[
\chi_1=(\chi_s \times \triv_{N_d})\ltimes \triv_{M_{n-d\times d}(F)}
\]
be a character of $Q_1$
where $\chi_s$ is the unramified character of $Q$ associated to $s$ by
\[
\chi_s(\diag(g_1,\dots,g_k)u)=\prod_{i=1}^k \abs{\det g_i}^{s_i},\ \ \ g_i\in G_{m_i},\,i=1,\dots,k,\,u\in U.
\]
It follows from Corollary \ref{cor: smth ind twist}  that the elements of $\Ind_{Q_1}^{P_n}(\tau_1 \otimes \chi_1)^\infty$ are smooth functions on $P_n$ with values in the space of $\tau$.
Let $\delta_1=\Delta_{P_n}/\Delta_{S_{n-d,d}}$ then transitivity of induction gives the isomorphism
\[
f\mapsto \varphi_f: \Ind_{Q_1}^{P_n}(\tau_1 \otimes \chi_1)^\infty \rightarrow\Ind_{S_{n-d,d}}^{P_n}((\sigma \otimes \psi_d)\ltimes \triv_{M_{n-d\times d}(F)})^\infty
\]
where $\varphi_f(p)(s)=\delta_1^{\frac12}(s)f(sp)$, $s\in S_{n-d,d}$, $p\in P_n$.
Since $f$ is a smooth function on $P_n$ it now follows that $\varphi_f$ is a smooth function on $G$ with values in $V$. It further follows from Corollary \ref{cor:ParInd} that $\varphi_f(p)\in V^\infty$ for $p\in P_n$.

To summarize so far, the elements of $\Ind_{S_{n-d,d}}^{P_n}((\sigma \otimes \psi_d)\ltimes \triv_{M_{n-d\times d}(F)})^\infty$ are smooth functions on $P_n$ with values in $V^\infty$.

Thus, $\pr_{d,\sigma}(\varphi):= \varphi(e)$ is a well defined linear transformation from $\Ind_{S_{n-d,d}}^{P_n}((\sigma \times \psi_k)\ltimes \triv_{M_{n-d\times d}(F)})^\infty$ to $V^\infty$. Evaluation at the identity is clearly not identically zero on any $P_n$-invariant space of smooth functions on $P_n$. The equivariance property \eqref{eq: k equiv cond} is immediate from
the definition of an induced representation.
%Finally, if $\sigma$ is irreducible then it follows from Theorem \ref{thm:UniSmoothIrr} that $\pr_{d,\sigma}$ has dense image.
The Proposition follows.
\end{proof}

Given a decomposition $n=m+r$ the Iwasawa decomposition on $G_{n-1}$ implies that $P_n=[(G_m \times P_r) \ltimes M_{m\times r}(F)] K_{n-1}$.
For $s=(s_1,s_2)\in\cc^2$ and $\varphi\in \pi\times \tau$ let
\[
    \varphi_s(p)=\abs{\det g_1}^{s_1}\abs{\det g_2}^{s_2}\varphi(p),\ \ \ p=[\diag(g_1,g_2)\ltimes X]k
\]
where $g_1\in G_m,\,g_2\in G_r,\,X\in M_{m\times r}(F)$ and $k\in K_{n-1}$.
It will also be convenient to denote by $I(\pi \otimes \tau,s)$ the representation of $P_n$ on
the space of $\pi \times \tau$ defined by
\[
(I(p,\pi\otimes \tau,s)\varphi)_s(x)=\varphi_s(xp), \ \ \ \varphi\in \pi\times\tau,\ p,\,x\in P_n.
\]
Thus
\[
I(\pi \otimes \tau,s)\simeq \abs{\ }^{s_1}\pi \times \abs{\ }^{s_2}\tau
\]
and the underlying space of $I(\pi \otimes \tau,s)^\infty$ is independent of $s$.
The following is straightforward from Lemma \ref{lem: sahi}.
\begin{corollary}\label{cor: IE commute}
Consider the Hilbert representations $\varrho$ of $G_r$ and $\pi$ of $G_m$ and let $s\in \cc^2$.
Then for every $j \ge0$ we have
\[
I(\pi \otimes I^jE(\varrho),s)\simeq I^jE(I(\pi \otimes \varrho,s)).
\]
\end{corollary}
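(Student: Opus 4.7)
The plan is to unpack both sides of the claimed isomorphism using the identification $I(\pi \otimes \tau, s) \simeq \abs{\ }^{s_1}\pi \times \abs{\ }^{s_2}\tau$ noted just above the Corollary, and then reduce the problem to a repeated application of the four parts of Lemma \ref{lem: sahi}. Writing $s = (s_1, s_2)$, the left-hand side becomes $\abs{\ }^{s_1}\pi \times \abs{\ }^{s_2}\bigl(I^j E(\varrho)\bigr)$, while the right-hand side becomes $I^j E\bigl(\abs{\ }^{s_1}\pi \times \abs{\ }^{s_2}\varrho\bigr)$. So the content of the Corollary is that applying the functor $I^j E$ commutes, up to the prescribed twist, with parabolic induction of the form $\abs{\ }^{s_1}\pi \times (\cdot)$ in the second factor.

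The first step will be to massage the right-hand side by pulling the $E$ past the induction via part \eqref{Eprod} of Lemma \ref{lem: sahi}, yielding $I^j\bigl(\abs{\ }^{s_1}\pi \times E(\abs{\ }^{s_2}\varrho)\bigr)$. Next I will push the $I^j$ inside by iterating part \eqref{Iprod} of the lemma $j$ times; a short induction on $j$ gives $\abs{\ }^{s_1}\pi \times I^j E(\abs{\ }^{s_2}\varrho)$. At that point it remains to interchange the internal twist $\abs{\ }^{s_2}$ with the functor $I^j E$ on the second tensor factor.

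For this last interchange I will use parts \eqref{Es} and \eqref{Is}: part \eqref{Es} lets me rewrite $E(\abs{\ }^{s_2}\varrho)$ as $\abs{\ }^{s_2} E(\varrho)$, and then iterating part \eqref{Is} $j$ times produces $I^j(\abs{\ }^{s_2}E(\varrho)) \simeq \abs{\ }^{s_2} I^j E(\varrho)$. Substituting this back gives $\abs{\ }^{s_1}\pi \times \abs{\ }^{s_2} I^j E(\varrho)$, which under the identification recalled above is exactly $I(\pi \otimes I^j E(\varrho), s)$.

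There is no real obstacle here; the only mildly delicate point is that the induction on $j$ must carry through an auxiliary Hilbert representation (namely $E(\abs{\ }^{s_2}\varrho)$ of $P_{r+1}$, and then successively $I^i E(\abs{\ }^{s_2}\varrho)$ of $P_{r+i}$) in the second slot of the external product $\abs{\ }^{s_1}\pi \times (\cdot)$. Since Lemma \ref{lem: sahi}\eqref{Iprod} is stated for arbitrary Hilbert representations $\pi$ on one side and $\tau$ on the $P$-side, this causes no issue, and the induction closes immediately. Thus the Corollary follows by a direct four-step chain of isomorphisms from Lemma \ref{lem: sahi}.
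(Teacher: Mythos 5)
Your proof is correct and matches the paper's intent exactly: the paper simply declares the Corollary ``straightforward from Lemma \ref{lem: sahi},'' and your four-step chain (pull $E$ through the product via \eqref{Eprod}, iterate \eqref{Iprod} to pull $I^j$ through, then use \eqref{Es} and \eqref{Is} to commute the twist $\abs{\ }^{s_2}$ past $I^jE$) is precisely the omitted verification. Nothing further is needed.
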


\section{Representations with symplectic models}\label{sec:Sp}
\setcounter{lemma}{0}

The purpose of this section is to study linear forms invariant by the symplectic group.
We begin with a result on Speh representations that we obtain by global means.

\begin{proposition} \label{lem:SpehDist}
Let $n=2mr$, $\delta\in\widehat{G_r}$ square integrable and $\pi=U(\delta,2m)\in \udg$. Then $\Hom_{Sp(2n)}(\pi^{\infty},\cc) \ne 0$.
\end{proposition}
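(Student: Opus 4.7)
The plan is to mirror the global automorphic argument used in the $p$-adic case in \cite[Proposition 1]{MR2332593}: realize $\pi=U(\delta,2m)$ as the local component at an archimedean place of a global Speh representation of $GL_{2mr}$ over a number field, and produce the desired $Sp(2n)$-invariant functional on $\pi^\infty$ by descending a non-vanishing symplectic period.

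First I would globalize the local datum $\delta$: choose a number field $k$ with an archimedean place $v_0$ such that $k_{v_0}=F$, together with a cuspidal automorphic representation $\Delta$ of $GL_r(\A_k)$ whose component at $v_0$ is isomorphic to $\delta$. For $r=1$ this amounts to choosing a Hecke character; for $r=2$ (which forces $F=\rr$) the archimedean discrete series $\delta$ admits a cuspidal globalization by a Poincar\'e-series / simple trace formula argument. Following Jacquet and M\oe glin--Waldspurger, I would then construct the global Speh representation $\E(\Delta,2m)$ of $GL_{2mr}(\A_k)$ as the iterated residue at $s=(\tfrac{2m-1}{2},\dots,\tfrac{1-2m}{2})$ of the Eisenstein series on $GL_{2mr}$ induced from $\Delta\otimes\cdots\otimes\Delta$ on the standard parabolic with Levi $GL_r^{2m}$; its local component at $v_0$ is then isomorphic to $U(\delta,2m)$.

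The heart of the argument is to show non-vanishing of the symplectic period
\[
P(\varphi)=\int_{Sp_{2mr}(k)\backslash Sp_{2mr}(\A_k)}\varphi(h)\,dh,\qquad \varphi\in\E(\Delta,2m),
\]
which is absolutely convergent because residues of cuspidal Eisenstein series are rapidly decreasing modulo the center. Unfolding the Eisenstein series against $P$ expresses it as an iterated residue of an Euler product of local symplectic zeta integrals and ratios of standard $L$-functions attached to $\Delta$; the pole responsible for the residue comes from the unramified computation, while at the ramified places one arranges each local integral to be non-zero by a suitable choice of test vector, yielding a non-zero $Sp_{2mr}(\A_k)$-invariant functional on $\E(\Delta,2m)$.

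Factorizing the non-zero functional $P$ over places produces, for each place $v$, a non-zero $Sp(2n)$-invariant linear form on the local component $\E(\Delta,2m)_v^\infty$; specializing to $v=v_0$ yields the desired element of $\Hom_{Sp(2n)}(U(\delta,2m)^\infty,\cc)$. The main obstacle is the archimedean piece of the global computation: since uniqueness of symplectic models at $v_0$ is only established in the companion work \cite{AOS}, we cannot invoke an abstract tensor factorization of $P$, and must instead verify directly that the archimedean local factor in the unfolded period---a zeta integral involving the Whittaker model of $\delta$---can be made non-zero by an explicit choice of archimedean test vector. This archimedean non-vanishing is the technical crux of the proof.
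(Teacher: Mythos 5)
Your global strategy coincides with the paper's: globalize $\delta$, form the residual (Speh) automorphic representation, and descend a non-vanishing symplectic period to the archimedean place. Two remarks on the first half. The non-vanishing of the global symplectic period on the residual representation is not something you need to re-derive by unfolding: it is precisely \cite[Theorem 3]{MR2248833}, and the paper simply quotes it. (Your sketch of the unfolding is in any case too optimistic --- the period of the residue is not literally an iterated residue of an Euler product of local symplectic zeta integrals, and making that computation rigorous is the content of an entire paper.) Also, for $r=1$ no globalization is needed at all: $U(\delta,2m)$ is then a character of $G_n$, so only $r=2$, $F=\rr$ requires the global argument; there the paper produces the cuspidal globalization of $\delta$ via Jacquet--Langlands from a quaternion algebra, which is on a par with your Poincar\'e-series suggestion.

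The genuine gap is in the descent from the global functional to the local one. You correctly observe that, absent local uniqueness of symplectic functionals (only established in \cite{AOS}), one cannot factor the period as a tensor product of local functionals. But your proposed remedy --- non-vanishing of an explicit archimedean local zeta integral --- is both left undone (you call it ``the technical crux'') and unnecessary. The standard way around this, and the one the paper uses, requires neither factorization nor any local computation: since $\varrho_\aut\simeq\otimes_{p\le\infty}\tau_p$ as a restricted tensor product \cite{MR546596}, pick a factorizable $\phi=\phi_\infty\otimes\phi^\infty$ with non-vanishing period and define $\lambda(v)=\ell(v\otimes\phi^\infty)$ for $v$ in the $(\mathfrak{g},K)$-module $\tau_\infty$ of $K$-finite vectors. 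This $\lambda$ is non-zero and is invariant under $Sp(n)\cap K$ and under $\mathfrak{sp}(n)$, because $\ell$ is $Sp(n,\A_\Q)$-invariant. The remaining point is that $\lambda$ is a priori defined only on $K$-finite vectors; to obtain a continuous $Sp(n)$-invariant functional on all of $\pi^\infty$ one invokes automatic continuity for reductive symmetric spaces (\cite[Theorem 2.1]{MR961900} or \cite[Theorem 1]{MR1176208}). This automatic-continuity step is the key idea missing from your write-up; without it (or the unproven archimedean non-vanishing you propose in its place) the proof is incomplete.
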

\begin{proof}
If $r=1$ then $\pi=\delta\circ \det$ is a character of $G_n$. The Proposition is obvious in this case.
Assume from now on that $r=2$ (and in particular that $F=\rr$).
%Since $(G_{2n},Sp(2n))$ is a Gelfand pair \cite{AS} we have $\dim \Hom _{Sp(2n)}(\varrho^{\infty},\cc) \leq 1$.
To complete the Proposition we
%need to show that $\Hom _{Sp(2n)}(\pi^{\infty},\cc)\ne 0$. For this purpose we
globalize $\pi$ to a discrete automorphic representation for which the symplectic periods have already been studied.

Let
$\Pi$ be a cuspidal automorphic representation of $GL(2,\A_\Q)$
with Archimedean component $\Pi_{\infty}\simeq\delta$. The existence of $\Pi$ is verified, for example, using the Jacquet-Langlands correspondence.
Indeed, let $D$ be the multiplicative group of the standard  quaternion algebra defined over $\qq$.
Let $\delta'$ be a representation of $D(\rr)$ associated with $\delta$ by the local Jacquet-Langlands correspondence
\cite[\S 5]{MR0401654}.
Since $\rr^*\bs D(\R)$ is compact, it is easy to construct using the trace formula an automorphic representation $\Pi'$ of $D(\A_\qq)$ so that
$\Pi'_\infty\simeq \delta'$ and $\Pi'_p$ is unramified for all primes $p>2$. It then follows from \cite[Theorem 14.4]{MR0401654} that $\pi$ is associated by the global Jacquet-Langlands correspondence to a cuspidal automorphic representation $\Pi$ of $GL(2,\A_\Q)$. In particular $\Pi_\infty \simeq \delta$ as required.

Let $\varrho$ be the unique irreducible quotient of $\abs{\det}^{\frac{m-1}2}\Pi \times \abs{\det}^{\frac{m-3}2}\Pi\times\cdots\times\abs{\det}^{\frac{1-m}2}\Pi$. It is a discrete automorphic representation
of $GL(n,\A_\Q)$ obtained by residues of
Eisenstein series (see \cite{MR1026752}).
Furthermore, its local component at infinity is $\varrho_\infty=\pi$.
Let $\varrho_\aut$ be the space of automorphic forms in (the unitary representation) $\varrho$.
Based on \cite[Theorem
3]{MR2248833}, the symplectic period defined on $\varrho_\aut$ by
\[
\ell(\phi)=  \int_{Sp(n,\Q)\bs Sp(n,\A_\Q)}\phi(h)\ dh
\]
is not identically zero. Recall that $\varrho_\aut\simeq \otimes_{p \le \infty}\tau_p$ where
$\tau_\infty=(\rho_\infty)_K$ is the $(\mathfrak{g},K)$-module of $K$-finite vectors in $\varrho_\infty$ and $\tau_p$ is the smooth part of $\varrho_p$ for $p<\infty$ \cite{MR546596}.
There is therefore an automorphic form $\phi\in \varrho_\aut$ that as a vector is of the form $\phi_\infty \otimes \phi^\infty$ with
$\phi_\infty\in \tau_\infty$ and $\phi^\infty\in \otimes_{p<\infty} \tau_p$ such that $\ell(\phi)\ne 0$.
Define
\[
\lambda(v)=\ell(v \otimes \phi^\infty),\ \ \ v \in \tau_\infty.
\]
Then $\lambda$ is a non-zero $Sp(n)\cap K$ and $\mathfrak{sp}(n)$-invariant linear
form on $\tau_\infty$ where $\mathfrak{sp}(n)$ is the Lie algebra of $Sp(n)$. By
the automatic continuity for reductive symmetric spaces  (cf.
\cite[Theorem 2.1]{MR961900} or \cite[Theorem 1]{MR1176208}) $\lambda$ extends to an $Sp(n)$-invariant linear form on the smooth part of $\rho_\infty$, i.e. it defines a non-zero element of $\Hom_{Sp(n)}(\pi^\infty,\cc)$. The Proposition follows.
\end{proof}
\begin{rem}
In \cite{GSS} an $Sp(n)$-invariant functional on the Speh representation $U(\delta,2m)$ is constructed by purely local means using \cite{MR1062967}.
\end{rem}

Next we consider induced representations.
Our main tool is a result of Carmona-Delorme that we now recall.

Let $(n_1,\dots,n_k)$ be a decomposition of $n$ and $P=MU$
the standard parabolic subgroup of $G_{2n}$ of type $(2n_1,\dots,2n_k)$ with unipotent radical $U$ and
standard Levi subgroup $M$.
Let $\j=\diag(J_{n_1},\dots,J_{n_k})$ where $J_n$ is defined by \eqref{eq: symp form} and
\[H=Sp(\j)=\{g\in G_{2n}: {}^t g \j g=\j\}.\]
Set $\tau(g)=\j{}^t g^{-1}\j^{-1}$ and let $\theta(g)={}^t g^{-1}$ be the standard Cartan involution of $G_{2n}$.
Note that $H=G^{\tau}$ and $P$ is $\theta \tau$- stable.
 Let $\sigma_i\in \ud{G_{2n_i}}$ and $0\ne \ell_i\in \Hom_{Sp(2n_i)}(\sigma_i^\infty,\cc)$, $i=1,\dots, k$. Set $\sigma=\sigma_1\otimes \cdots \otimes \sigma_k$ and $\ell=\ell_1\otimes\cdots\otimes \ell_l$.
Thus $0\ne \ell\in \Hom_{M\cap H}(\sigma^\infty,\cc)$. There is a permutation matrix $\eta\in G_{2n}$ so that ${}^t\eta \j \eta=J_n$ and therefore $\eta^{-1}Sp(\j)\eta=Sp(2n)$.
The following is therefore an application of \cite[Proposition 2 and Theorem 3]{MR1274587}.
\begin{proposition}\label{prop: CD}
With the above notation the integral
\[
\xi(\varphi; \ell,s)=\int_{(M\cap H)\bs H}\ell(\varphi_s(h\eta))\ dh,\ \ \ \varphi\in (\sigma_1 \times \cdots \times \sigma_k)^\infty
\]
converges absolutely for $\re(s_1) \gg \re(s_2) \gg \cdots \gg \re(s_k)$ and extends to a meromorphic function
of $s \in \cc^k$. Whenever holomorphic at $s$ it defines a non-zero element $\xi(\ell,s)\in \Hom_{Sp(2n)}(I(\sigma,s)^\infty,\cc)$.
\end{proposition}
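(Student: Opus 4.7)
The plan is to view the statement as a direct application of Carmona--Delorme \cite{MR1274587}. Their theorem establishes absolute convergence and meromorphic continuation of $H$-periods on parabolically induced representations under the following structural hypotheses, all of which have already been assembled in the paragraph preceding the proposition: $\tau$ is an involution of $G_{2n}$ with fixed-point subgroup $H$; a Cartan involution $\theta$ commuting with $\tau$; a $\theta\tau$-stable parabolic $P=MU$; and a non-zero continuous $(M\cap H)$-invariant linear form $\ell$ on the smooth vectors of the unitary inducing datum $\sigma$. In our situation $\theta\tau(g)=\j^{-1}g\j$ (using $\j^{t}=-\j$), so the block-diagonal form of $\j=\diag(J_{n_1},\dots,J_{n_k})$, matched with the block type $(2n_1,\dots,2n_k)$ of $P$, ensures that $P$ is $\theta\tau$-stable. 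Moreover $M\cap H=\prod_i Sp(J_{n_i})$, and $\ell=\ell_1\otimes\cdots\otimes\ell_k$ is a continuous $(M\cap H)$-invariant linear form on $\sigma^\infty$ by construction.

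With the hypotheses verified, Proposition 2 of loc. cit. yields absolute convergence of the period integral $\int_{(M\cap H)\backslash H}\ell(\varphi_s(h))\,dh$ in an open cone of $s\in\cc^k$, together with $H$-invariance of the resulting linear form, while Theorem 3 supplies its meromorphic continuation to all of $\cc^k$ and the non-vanishing of the functional at generic holomorphic parameters. The cone of absolute convergence is cut out by positivity along the simple roots of $M$ in $P$; for the standard block parabolic of type $(2n_1,\dots,2n_k)$ these conditions become $\re(s_i)-\re(s_{i+1})\gg 0$, which is exactly the advertised condition $\re(s_1)\gg\re(s_2)\gg\cdots\gg\re(s_k)$.

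Finally, to pass from invariance under $H=Sp(\j)$ to invariance under $Sp(2n)=Sp(J_n)$ as demanded by the proposition, I would insert the permutation matrix $\eta$ with ${}^t\eta\,\j\,\eta=J_n$; since $\eta^{-1}Sp(\j)\eta=Sp(2n)$, replacing $\varphi_s(h)$ by $\varphi_s(h\eta)$ in the integral leaves absolute convergence and meromorphic continuation unchanged and transforms $Sp(\j)$-invariance into $Sp(2n)$-invariance, producing the desired element $\xi(\ell,s)\in\Hom_{Sp(2n)}(I(\sigma,s)^\infty,\cc)$. The most delicate point --- the main obstacle if one were to attempt a direct proof rather than invoking Carmona--Delorme --- is the non-vanishing of the meromorphic continuation at arbitrary holomorphic $s$; in loc. cit. this is established via an explicit constant-term/asymptotic analysis along the open $PH$-orbit which shows that the normalized integrand does not vanish identically on any $s$-slice.
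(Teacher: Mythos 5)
Your proposal is correct and follows essentially the same route as the paper, which simply sets up the involutions $\tau$, $\theta$, the $\theta\tau$-stable parabolic, the form $\ell$, and the conjugating matrix $\eta$, and then invokes Proposition 2 and Theorem 3 of Carmona--Delorme. Your verification that $\theta\tau(g)=\j^{-1}g\j$ preserves $P$ and your use of $\eta$ to convert $Sp(\j)$-invariance into $Sp(2n)$-invariance match the paper's argument exactly.
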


\begin{theorem}\label{thm:EvenDist}
Let $\pi\in \ud{G_{2n}}$ be an even representation then $\Hom_{Sp(2n)}(\pi^{\infty},\cc)\ne 0$.
\end{theorem}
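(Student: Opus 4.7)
The plan is to reduce the problem to the case of Speh representations, handled by Proposition~\ref{lem:SpehDist}, and then to propagate the resulting symplectic invariant form through parabolic induction using the Carmona--Delorme construction of Proposition~\ref{prop: CD}.

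First, by the Tadic classification recalled after Lemma~\ref{lemma: gen irr}, I would write
\[
\pi \simeq |\det|^{\alpha_1}U(\delta_1,t_1) \times \cdots \times |\det|^{\alpha_k}U(\delta_k,t_k),
\]
with each $\delta_i\in\ud{G_{r_i}}$ square-integrable, $r_i\in\{1,2\}$, and $|\alpha_i|<\tfrac12$. Using the formula $\vv(\pi)=(\langle t_1\rangle_{r_1},\ldots,\langle t_k\rangle_{r_k})$ together with the evenness hypothesis, each $t_i$ must be even. Setting $2n_i:=t_ir_i$, each $U(\delta_i,t_i)\in\ud{G_{2n_i}}$ is a Speh representation on a group of even rank, so Proposition~\ref{lem:SpehDist} yields a non-zero $\ell_i\in\Hom_{Sp(2n_i)}(U(\delta_i,t_i)^\infty,\cc)$. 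Since $|\det|^{\alpha_i}$ restricts trivially to $Sp(2n_i)$, each $\ell_i$ remains equivariant for the twisted factor as well.

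Next, I would put $\sigma:=U(\delta_1,t_1)\otimes\cdots\otimes U(\delta_k,t_k)$, $\ell:=\ell_1\otimes\cdots\otimes\ell_k$, and $\alpha:=(\alpha_1,\ldots,\alpha_k)\in\cc^k$, and apply Proposition~\ref{prop: CD} to obtain a meromorphic family $s\mapsto\xi(\ell,s)$ of $Sp(2n)$-invariant continuous linear forms on the common Fr\'echet space $I(\sigma,s)^\infty$. Since $I(\sigma,\alpha)\simeq\pi$, if $\xi(\ell,\cdot)$ is holomorphic at $s=\alpha$ then its specialization is the sought form and we are done.

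The principal obstacle I foresee is that $\xi(\ell,s)$ may have a pole at $s=\alpha$: the point $\alpha$ lies outside the convergence region of Proposition~\ref{prop: CD}, so we must rely on meromorphic continuation and its poles are a priori possible. To handle this I would use a standard leading-term argument: choose a direction $v\in\cc^k$ for which Lemma~\ref{lemma: gen irr} guarantees that $I(\sigma,\alpha+tv)$ is irreducible for all $t$ in some punctured neighborhood of $0$, let $N\geq0$ be the order of the pole of $t\mapsto\xi(\ell,\alpha+tv)$ at $t=0$, and set
\[
\tilde\ell:=\lim_{t\to0}\,t^N\,\xi(\ell,\alpha+tv).
\]
Because the underlying Fr\'echet space of $I(\sigma,s)^\infty$ is independent of $s$, $\tilde\ell$ is well defined on $\pi^\infty$, is non-zero by the minimality in the choice of $N$, and is $Sp(2n)$-invariant as a pointwise limit of $Sp(2n)$-invariant continuous forms. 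This produces the required non-zero element of $\Hom_{Sp(2n)}(\pi^\infty,\cc)$.
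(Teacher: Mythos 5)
Your proposal is correct and follows essentially the same route as the paper: decompose $\pi$ as $I(\sigma,\alpha)$ with $\sigma$ a tensor product of Speh representations $U(\delta_i,t_i)$ on even-rank factors, apply Proposition~\ref{lem:SpehDist} to each factor, feed the resulting $\ell=\ell_1\otimes\cdots\otimes\ell_k$ into the Carmona--Delorme family of Proposition~\ref{prop: CD}, and extract a non-zero invariant form at $s=\alpha$ by a leading-term limit along a generic complex line. The only cosmetic difference is your invocation of Lemma~\ref{lemma: gen irr} for irreducibility along the line, which is harmless but not needed for this step (the non-vanishing of the leading coefficient already follows from the minimality of the pole order).
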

\begin{proof}
%By the multiplicity one of it is enough to show that $\Hom_{Sp(2n)}(\pi^{\infty},\cc)\ne 0$.
By the classification of the unitary dual and the recipe for the $SL(2)$-type we may write $\pi=I(\sigma,\alpha)$ where $\sigma=U(\delta_1,2m_1) \otimes \cdots \otimes U(\delta_k,2m_k)$ with $\delta_i$ square integrable and $\alpha=(\alpha_1,\dots,\alpha_k)$ with $-\frac12 <\alpha_i <\frac12$, $i=1,\dots,k$. Let $n_i$ be such that $U(\delta_i,2m_i)\in \ud{G_{2n_i}}$. By Proposition \ref{lem:SpehDist} there exists $0\ne \ell_i \in \Hom_{Sp(2n_i)}(\sigma_i^\infty,\cc)$.
By Proposition \ref{prop: CD} and using its notation we obtain a non-zero meromorphic family of linear forms
$\xi(\ell,s)\in \Hom_{Sp(2n)}(I(\sigma,s)^\infty,\cc)$. There exists a generic direction $s_0\in \cc^k$ such that
$\xi(\ell,\alpha+zs_0)$ is meromorphic in a punctured neighborhood of $z=0$ in $\cc$. Let $k_0$ be the smallest integer $k$ such that
$z^{k}\xi(\ell,\alpha+zs_0)$ is holomorphic at $z=0$. We
can now define
\[
L=\lim_{z\to 0}z^{k_0}\xi(\ell,\alpha+zs_0).
\]
Thus $0\ne L\in \Hom_{Sp(2n)}(\pi^\infty,\cc)$.

\end{proof}

\section{Proof of Theorem \ref{thm:main}}\label{sec:PfMain}

We change the setting by defining another family of Klyachko subgroups compatible with the theory of highest derivatives.
Fix a decomposition $n=2k+r$ and let
\[
H'_{2k,r}=\left\{  \left(
\begin{array}
[c]{cc}%
h & X\\
0 & u
\end{array}
\right)  \in G_n:u\in N_r,\,X\in M_{2k\times r}(F)\text{ and }h\in
Sp(2k)\right\}.
\]
Let $\phi'_{2k,r}$ be the character of  $H'_{2k,r}$ defined by
\[
\phi'_{2k,r}\left(
\begin{array}
[c]{cc}%
h & X\\
0 & u
\end{array}
\right)  =\psi_{r}(u).
\]

Let $\tau$ be the involution on $G_n$ defined by $g^\tau=w_n {}^t g^{-1} w_n$. Note that $H'_{2k,r}=H_{r,2k}^\tau$ and $\phi'_{2k,r}(h)=\phi_{r,2k}(h^\tau)$, $h\in H'_{2k,r}$.
It follows that for any $\pi\in \ud{G_n}$ we have
\[
    \Hom_{H_{r,2k}}(\pi^\infty,\phi_{r,2k})\simeq \Hom_{H'_{2k,r}}((\pi^\tau)^\infty,\phi'_{2k,r}).
\]
By the Gelfand-Kazhdan Theorem $\pi^{\tau} \simeq \widetilde{\pi}$ where $\widetilde{\pi}$ denotes the dual of $\pi$ (see e.g. \cite[Theorem 2.4.2]{MR2474319}) and therefore
\[
    \Hom_{H_{r,2k}}(\pi^\infty,\phi_{r,2k})\simeq \Hom_{H'_{2k,r}}(\tilde\pi^\infty,\phi'_{2k,r}).
\]

It further follows from the classification and the definition of the partition $\vv(\pi)$ that $\vv(\widetilde{\pi})=\vv(\pi)$ and hence $r(\widetilde{\pi})=r(\pi)$. Theorem \ref{thm:main} is therefore equivalent to the statement
\begin{equation}\label{eq: main'}
\Hom_{H'_{n-r(\pi),r(\pi)}}(\pi^\infty,\phi'_{n-r(\pi),r(\pi)})\ne0, \ \ \ \pi \in \ud{G_n}.
\end{equation}

Let $\pi\in \ud{G_n}$. If $r(\pi)=0$, i.e. $\pi$ is even, then \eqref{eq: main'} follows from Theorem \ref{thm:EvenDist}.
Assume from now on that $r=r(\pi)>0$ and let $k=(n-r)/2$. Note then that $H'_{2k,r}$ is a subgroup of $P_n$.

Write $\pi=\pi_e \times
\pi_o$ where $\pi_e\in\ud{G_{2k_1}}$ is even and
$\pi_o\in\ud{G_t}$ is odd as in Corollary \ref{cor:DecompEvenOdd}.
For $s\in \cc$ let
\[
\pi_s=I(\pi_e \otimes \pi_o,(0,s))
\]
be a representations of $G_n$ and
\[
\tau_s=I(\pi_e \otimes (\pi_o|_{P_t}),(\frac12,-2k_1+s))
\]
a representation of $P_n$.
By Corollary \ref{cor:ParInd} restriction of functions to $P_n$ is a well defined (and clearly $P_n$-equivariant) map
\[
\kappa_s: \pi_s^\infty \rightarrow \tau_s^\infty.
\]
In the parameter $s$ it is holomorphic and non-zero at each $s$.

Let $d=\text{depth}(\pi_o)$. By Corollary \ref{cor:DerOddEven} $d= r(\pi)$ and $A(\pi_o)$ is even.
Recall that $\pi_o|_{P_t}=I^{d-1}E(A(\pi_o))$ and let
\[
\sigma_s=I(\pi_e \otimes A(\pi_o),(\frac12,-2k_1+s)).
\]
By Corollary \ref{cor: IE commute} there is an isomorphism of Hilbert representations of $P_n$
\[
\tau_s \simeq I^{d-1}E(\sigma_s).
\]
Denote by
\[
\iota_s: \tau_s^\infty \rightarrow I^{d-1}E(\sigma_s)^\infty
\]
its restriction to the corresponding isomorphism between the spaces of smooth vectors.
%\Omer{Check its the correct twist!}
Thus
\[
\iota_s\circ \kappa_s:\pi_s^\infty \rightarrow I^{d-1}E(\sigma_s)^\infty
\]
is a holomorphic family of non-zero $P_n$-equivariant maps.
Let
\[
\pr_{d,\sigma_s}: I^{d-1}E(\sigma_s)^\infty \rightarrow \sigma_s^\infty
\]
be the map provided by Proposition \ref{prop: proj k sigma}. It is defined by evaluation at the identity and therefore it is independent of $s$. By  Proposition \ref{prop: proj k sigma} its restriction to the
image of $\iota_s\circ \kappa_s$ is non-zero.
Thus
\[
\pr_{d,\sigma_s}\circ \iota_s\circ \kappa_s:\pi_s^\infty \rightarrow \sigma_s^\infty
\]
is non-zero. By \eqref{eq: k equiv cond}  it is, in particular, $G_{2k}$-equivariant where $k=k_1+\frac{t-r}2$.
It follows from Theorem \ref{thm:EvenDist} together with Proposition \ref{prop: CD} that there exists a non-zero holomorphic family of linear forms
\[
\ell_s\in \Hom_{Sp(2k,F)}(\sigma_s^\infty,\cc)
\]
in a punctured disc centered at $s=0$. By possibly taking a smaller disc it further follows from Lemma \ref{lemma: gen irr} that $\sigma_s$ is irreducible in the punctured disc.
By Theorem \ref{thm:UniSmoothIrr} in this punctured disc $\pr_{d,\sigma_s}\circ \iota_s\circ \kappa_s:\pi_s^\infty \rightarrow \sigma_s^\infty$ has a dense image and therefore the holomorphic family of  linear forms
$L_s:=\ell_s \circ \pr_{d,\sigma_s}\circ \iota_s\circ \kappa_s$ on $\pi_s^\infty$ is non-zero.
By the equivariance property \eqref{eq: k equiv cond}, $L_s\in \Hom_{H_{2k,r}'}(\pi_s^\infty,\phi_{2k,r}')$.
There is therefore an integer $a$ such that $0\ne L:=\lim_{s\to 0}s^aL_s$. Thus $0\ne L\in \Hom_{H_{2k,r}'}(\pi^\infty,\phi_{2k,r}')$ and \eqref{eq: main'} follows.
This completes the proof of Theorem \ref{thm:main}.

\bibliographystyle{alpha}\def\cprime{$'$}

%\bibliography{source}

\end{document}